\documentclass{amsart}%
\usepackage{graphicx}
\usepackage{amsmath}
\usepackage{amsfonts}
\usepackage{amssymb}
\usepackage{ifpdf}
\usepackage{esint}%
\providecommand{\U}[1]{\protect\rule{.1in}{.1in}}
\newtheorem{theorem}{Theorem}

\newtheorem{corollary}[theorem]{Corollary}

\newtheorem{definition}[theorem]{Definition}
\newtheorem{example}[theorem]{Example}

\newtheorem{proposition}[theorem]{Proposition}
\newtheorem{remark}[theorem]{Remark}

\makeatletter

\def\XXint#1#2#3{{\setbox0=\hbox{$#1{#2#3}{\int}$}
\vcenter{\hbox{$#2#3$}}\kern-.5\wd0}}

\makeatother
\makeatletter \@namedef{subjclassname@2020}{\textup{2020}
Mathematics Subject Classification} \makeatother
\begin{document}
\title[Oscillation inequalities and lower ball growth]{Oscillation inequalities, lower ball growth and Sobolev embeddings on metric
measure spaces}
\author{Joaquim Mart\'{\i}n}
\address{Department of Mathematics, Universitat Aut\`onoma de Barcelona,
08193 Bellaterra, Barcelona, Spain}
\email{Joaquin.Martin@uab.cat}
\thanks{Corresponding author: Joaquin.Martin@uab.cat. ORCID:
0000-0002-7467-787X}
\thanks{Funding: This work was partially supported by Grants
PID2024-160507NB-I00 and PID2024-155917NB-I00, funded by
MCIN/AEI/10.13039/501100011033.}
\subjclass[2020]{46E30, 46E35, 46B70}
\keywords{Oscillation functional, rearrangement-invariant spaces, Boyd indices,
Hardy-type operators, Sobolev embeddings}

\begin{abstract}
We study the relation between lower ball growth, rearrangement oscillations,
and Sobolev-type embeddings on metric measure spaces. We prove that a lower
estimate for the ball function
\[
h_{\mu}(r):=\inf_{x\in\Omega}\mu(B(x,r))
\]
implies pointwise symmetrization inequalities for averaged differences and
Haj\l asz gradients, without doubling, Poincar\'e-type assumptions, or extra
topological hypotheses. Conversely, the validity of either family of
inequalities for suitable cutoff functions recovers the corresponding lower
growth estimate. Thus, up to multiplicative constants, the lower geometry of
the measure is equivalent to the pointwise oscillation principle itself.

Taking norms in rearrangement-invariant spaces yields normed oscillation
inequalities and Sobolev embeddings for Haj\l asz and averaged Besov spaces.
This separates the metric-measure input from the choice of the final
function-space target. In the power-growth model, the resulting targets are
identified with Lorentz spaces. We also show how Sobolev inequalities between
general rearrangement-invariant spaces force lower ball estimates through the
fundamental functions of the source and target spaces. The results apply to
general admissible growth functions and moduli of smoothness, beyond the
classical power setting.
\end{abstract}
\maketitle

\section{Introduction}

Let $(\Omega,d,\mu)$ be a metric measure space with $\mu(\Omega)=\infty$,
where $\mu$ is nonatomic and every ball has positive and finite measure. The
necessary notation and background are collected in
Section~\ref{sec:preliminaries}.

Sobolev-type embeddings on metric measure spaces are closely related to the
size of balls. A lower estimate of the form
\begin{equation}
\mu(B(x,r))\ge Cr^{Q}, \qquad x\in\Omega,\quad r>0, \label{eq:zero}%
\end{equation}
provides the dimensional information underlying the classical Sobolev
embedding theorem. Lower volume growth assumptions, often combined with
Poincar\'e-type conditions, are standard in the theory of Sobolev spaces on
metric measure spaces; see, for instance, \cite{Ha1,HK}.

There is also a converse direction. P. G\'orka \cite{GorkaPA} proved that,
under a doubling assumption, an embedding
\begin{equation}
M^{1,p}(\Omega)\hookrightarrow L^{q}(\Omega), \qquad p<q, \label{incluintro}%
\end{equation}
implies \eqref{eq:zero}, with
\[
\frac{1}{Q}=\frac{1}{p}-\frac{1}{q}.
\]
Here $M^{1,p}(\Omega)$ denotes the Haj\l asz--Sobolev space based on $L^{p}$;
see Section~\ref{sec:oscillation-reductions}. Related converse results have
subsequently been obtained for Haj\l asz--Sobolev, Besov, and Slobodeckij-type
spaces; see \cite{AGH,GSStudia,GorkaNA,GKS,MO} and the references therein.

The purpose of this paper is to identify and develop the oscillation mechanism
underlying these results. Rather than starting from a prescribed target space,
such as the $L^{q}(\Omega)$ target in \eqref{incluintro}, we first establish
pointwise symmetrization inequalities and only afterwards identify the
resulting function spaces. This separates the geometric part of the argument
from the choice of the final target.

The central quantity is the rearrangement oscillation
\[
O(f,t):=f^{\ast\ast}(t)-f^{\ast}(t),
\]
where $f^{\ast\ast}(t)=\frac{1}{t}\int_{0}^{t}f^{\ast}(s)\,ds$ and $f^{\ast}$
is the decreasing rearrangement of $f$ with respect to the measure $\mu.$

The systematic use of $f^{\ast\ast}-f^{\ast}$ in symmetrization and
endpoint Sobolev inequalities goes back to work of M. Milman and
collaborators; see, for example, \cite{MP,MMP,MM6,MM3,BMR,MMas,MOfe}
and the references therein.

Given an admissible growth function $v$ in the sense of
Subsection~\ref{subsec:fundamental-indices}, we study the consequences of the
lower estimate\footnote{The standard model is $v(r)=cr^{Q}$, corresponding to
a lower Ahlfors-type condition. Such assumptions appear in many Sobolev
embedding results on metric measure spaces; see, for instance,
\cite{GorkaPA,AGH,Karak,GorkaNA,GSStudia,PA}. General moduli of smoothness in
metric spaces were considered, for example, in \cite{Mas,GKS}.}%

\[
h_{\mu}(r)\ge v(r),\qquad r>0.
\]
The main symmetrization result shows that this lower bound implies oscillation
inequalities for two different notions of smoothness.

First, let $0<p<\infty$ and set $p_{0}:=\min\{p,1\}$. For the averaged moduli
of smoothness, we use
\[
\nabla_{r}^{p} f(x) := \left(  \frac{1}{\mu(B(x,r))} \int_{B(x,r)}
|f(x)-f(y)|^{p}\,d\mu(y) \right)  ^{1/p}, \qquad r>0.
\]
The lower estimate $h_{\mu}\ge v$ yields, in a simplified form,
\begin{equation}
O(|f|^{p_{0}},t)^{1/p_{0}} \leq C \left(  \left[  \nabla_{v^{-1}(2t)}^{p}
f\right]  ^{p} \right)  ^{**}(t)^{1/p}. \label{osc1intro}%
\end{equation}

Second, for the Haj\l asz scale, let $\phi$ be an admissible
modulus.\footnote{The classical Haj\l asz--Sobolev space corresponds to
$\phi(t)=t$; see \cite{Ha1,HK}. The fractional model corresponds to
$\phi(t)=t^{s}$, $s>0$, and was further developed in metric settings in
\cite{Yang}. General moduli of smoothness and related generalized smoothness
scales appear, for example, in \cite{Mas,Karak,LYY}.} A function $g$ is a
$\phi$-Haj\l asz gradient of $f$ if
\[
|f(x)-f(y)| \le\phi(d(x,y))\bigl(g(x)+g(y)\bigr)
\]
for almost every $x,y\in\Omega$. Setting
\[
\omega(t):=\phi(v^{-1}(t)),
\]
we obtain
\begin{equation}
O(|f|^{p_{0}},t)^{1/p_{0}} \leq C \omega(t)(g^{p})^{**}(t)^{1/p}.
\label{osc2intro}%
\end{equation}

The important point is that both \eqref{osc1intro} and \eqref{osc2intro}
require only the lower ball estimate $h_{\mu}\ge v$. In particular,
\eqref{osc2intro} extends the result obtained in \cite{PA} for the model power
setting $\phi(r)=r^{\alpha}$, with lower volume growth of order $r^{Q}$ and
under additional structural assumptions on the measure (doubling or suitable
continuity assumptions).

Conversely, the validity of either \eqref{osc1intro} or \eqref{osc2intro} for
measurable functions implies\footnote{Throughout the paper, $A\preceq B$ means
that $A\leq C B$ for a constant $C>0$ independent of the relevant functions
and variables. We write $A\succeq B$ when $B\preceq A$, and $A\simeq B$ when
both $A\preceq B$ and $A\succeq B$.}
\[
h_{\mu}(r)\succeq v(r).
\]

Thus, up to multiplicative constants, the lower geometry and the pointwise
oscillation inequalities contain the same information.

Let $X$ be a rearrangement-invariant space. Taking the $X$-norm in
\eqref{osc2intro}, we obtain the Sobolev-type oscillation estimate
\[
\left\Vert \frac{O(|f|^{p_{0}},\cdot)^{1/p_{0}}}{\omega(\cdot)}\right\Vert
_{X}\preceq\left\Vert \left(  (g^{p})^{\ast\ast}(\cdot)\right)  ^{1/p}%
\right\Vert _{X}.
\]
As shown in Section~\ref{sec:oscillation-reductions}, the auxiliary exponent
$p$ can be chosen so that the right-hand side is controlled by $\Vert
g\Vert_{X}$. Hence
\[
\left\Vert \frac{O(|f|^{p_{0}},\cdot)^{1/p_{0}}}{\omega(\cdot)}\right\Vert
_{X}\preceq\Vert g\Vert_{X}.
\]

Conversely, testing the resulting Sobolev-type oscillation inequalities on
suitable cutoff functions recovers the corresponding lower estimate for
$h_{\mu}$. Thus, the results follow the general scheme
\[
\text{lower ball growth} \quad\Longleftrightarrow\quad\text{pointwise
oscillation} \quad\Longleftrightarrow\quad\text{normed oscillation}.
\]

To see how $\omega$ determines the correct target space, consider the basic
power model
\[
v(r)\simeq r^{Q},\qquad\phi(r)=r^{\alpha}.
\]
Then
\[
\omega(t)=\phi(v^{-1}(t))\simeq t^{\alpha/Q}.
\]

Suppose now that
\[
\underline{\alpha}_{X}>\frac{\alpha}{Q},
\]
where \(\underline{\alpha}_X\) denotes the lower Boyd index of \(X\)
(see \eqref{boydindi} below). Then, as shown in
Section~\ref{sec:embedding-consequences}, \[
\left\|t^{-\alpha/Q}O(|f|^{p_0},t)^{1/p_0}\right\|_X \simeq
\left\|t^{-\alpha/Q}f^{**}(t)\right\|_X. \]
 Consequently, the normed
oscillation inequality is equivalent to the Sobolev-type estimate
\[
\left\|  t^{-\alpha/Q}f^{**}(t)\right\|  _{X} \preceq\|g\|_{X}.
\]
Thus, by the equivalence discussed above, this Sobolev-type estimate is
equivalent to the lower growth condition
\[
h_{\mu}(r)\succeq r^{Q}.
\]

This allows us to recover, as a particular case, the equivalences between
lower ball bounds and Sobolev embeddings obtained in
\cite{AGH,GSStudia,GorkaNA,GKS,MO}. Indeed, let $X=L^{p}$, with $1<p<Q/\alpha
$. Since
\[
\underline{\alpha}_{L^{p}} = \overline{\alpha}_{L^{p}} = \frac1p,
\]
the condition $\underline{\alpha}_{X}>\alpha/Q$ is equivalent to $\alpha p<Q$.
Defining $p_{\alpha}^{*}$ by
\[
\frac1{p_{\alpha}^{*}} = \frac1p-\frac{\alpha}{Q},
\]
we have
\[
\left\|  t^{-\alpha/Q}f^{**}(t)\right\|  _{L^{p}(0,\infty)} \simeq
\|f\|_{L^{p_{\alpha}^{*},p}(\Omega)}.
\]
Hence the normed oscillation estimate becomes the sharper Lorentz embedding
\[
\|f\|_{L^{p_{\alpha}^{*},p}(\Omega)} \preceq\|g\|_{L^{p}(\Omega)}.
\]

The same scheme applies to the averaged Besov scale. Starting from
\eqref{osc1intro}, we obtain a corresponding normed oscillation estimate
controlled by the averaged Besov seminorm, while the converse implication
recovers the lower ball growth. The only additional assumption is the uniform
boundedness of the averaging operators on the appropriate
rearrangement-invariant space; see Section~\ref{sec:oscillation-reductions}.

Taken together, these results show that the lower geometry of the space is
already encoded in the rearrangement oscillation, before any target space is
selected. The pointwise inequalities provide a common mechanism for the
Haj\l asz and averaged Besov scales, while the subsequent choice of a
rearrangement-invariant target determines how much of this geometric
information remains visible at the level of embeddings.

Let us finally describe the organization of the paper.
Section~\ref{sec:preliminaries} recalls the necessary background on
rearrangements, rearrangement-invariant spaces, admissible
functions, Boyd indices, Haj\l asz gradients, and averaged moduli of
smoothness. Section~\ref{sec:symmetrization} proves the pointwise
symmetrization inequalities for averaged differences and Haj\l asz
gradients, together with their converse implications.
Section~\ref{sec:oscillation-reductions} introduces the Haj\l asz
and averaged Besov scales, derives the corresponding normed
oscillation inequalities, and proves that they retain the lower ball
growth. Section~\ref{sec:embedding-consequences} identifies the
Lorentz targets, studies the lower growth forced by Sobolev
inequalities between general rearrangement-invariant spaces through
the fundamental functions of the source and target spaces, compares
the resulting conclusions with \cite{AGH}, and derives
Slobodeckij-type consequences. The final section compares the Haj\l
asz and averaged Besov scales and gives non-doubling examples for
which the averaging operators remain uniformly bounded.

\section{Preliminaries and background}

\label{sec:preliminaries}

In this section we fix some notation and recall the background material that
will be used throughout the paper. We also specify the basic assumptions on
the metric measure spaces and on the rearrangement-invariant spaces considered
below. For further background on rearrangements, rearrangement-invariant
spaces, interpolation, Boyd indices, we refer to \cite{BS,Boy,MS,KPS}.

Throughout, $(\Omega,d)$ denotes a separable metric space. For $x\in\Omega$
and $r>0$, we write $B(x,r):=\{y\in\Omega:\ d(x,y)<r\} $ for the open ball of
center $x$ and radius $r$.

A \emph{metric measure space} is a triple $(\Omega,d,\mu)$ with $\mu$ a Borel
measure such that $0<\mu(B)<\infty$ for every ball $B\subset\Omega$. We assume
that $\mu(\Omega)=\infty$ and that $\mu(\left\{  x\right\}  )=0 $ for all
$x\in\Omega$, a convention we keep for the rest of the paper.

Given a metric measure space $(\Omega,d,\mu)$, define the \emph{lower ball
function} $h_{\mu}:(0,\infty)\to[0,\infty)$ by
\[
h_{\mu}(r):=\inf_{x\in\Omega}\mu(B(x,r)).
\]

We shall use the standard notation
\[
\mathchoice
{{\setbox0=\hbox{$\displaystyle{\textstyle -}{\int}$}
\vcenter{\hbox{$\textstyle -$}}\kern-.5\wd0}}
{{\setbox0=\hbox{$\textstyle{\scriptstyle -}{\int}$}
\vcenter{\hbox{$\scriptstyle -$}}\kern-.5\wd0}}
{{\setbox0=\hbox{$\scriptstyle{\scriptscriptstyle -}{\int}$}
\vcenter{\hbox{$\scriptscriptstyle -$}}\kern-.5\wd0}}
{{\setbox0=\hbox{$\scriptscriptstyle{\scriptscriptstyle -}{\int}$}
\vcenter{\hbox{$\scriptscriptstyle -$}}\kern-.5\wd0}} \!\int_{E} f\,d\mu:=
\frac{1}{\mu(E)}\int_{E} f\,d\mu,
\]
whenever $0<\mu(E)<\infty$.

\subsection{Rearrangements and oscillations}

\label{subsec:rearrangements}

Let $(\Omega,d,\mu)$ be a metric measure space, and let $L^{0}(\mu)$ denote
the space of measurable functions modulo equality $\mu$-a.e.

For $f\in L^{0}(\mu)$, the decreasing rearrangement of $f$ is defined by
\[
f_{\mu}^{\ast}(t):=\inf\{\lambda\geq0:\ \mu\{x\in\Omega:\ |f(x)|>\lambda\}\leq
t\},\qquad t>0.
\]
When the underlying measure is clear from the context, we shall simply write
$f^{\ast}$.

A basic property of rearrangements is
\begin{equation}
\label{hard}\sup_{\mu(E)=t}\int_{E} |f(x)|\,d\mu(x) = \int_{0}^{t}
f^{*}(s)\,ds.
\end{equation}
The maximal rearrangement of $f$ is defined by
\[
f^{**}(t) := \frac1t\int_{0}^{t} f^{*}(s)\,ds, \qquad t>0.
\]
Then $f^{**}$ is decreasing and
\[
f^{*}(t)\le f^{**}(t), \qquad t>0.
\]

The oscillation functional is defined by
\[
O(f,t):=f^{\ast\ast}(t)-f^{\ast}(t),\qquad t>0.
\]
Then one has
\begin{equation}
\frac{d}{dt}f^{\ast\ast}(t)=-\frac{O(f,t)}{t}. \label{der2est}%
\end{equation}
Moreover, the function $t\mapsto tO(f,t)$ is increasing. Consequently, for
every $u>0$,
\begin{align*}
f^{\ast\ast}(u)-f^{\ast\ast}(2u)  &  =\int_{u}^{2u}O(f,s)\,\frac{ds}{s}%
=\int_{u}^{2u}sO(f,s)\,\frac{ds}{s^{2}}\\
&  \geq uO(f,u)\int_{u}^{2u}\frac{ds}{s^{2}}=\frac{1}{2}\,O(f,u) .
\end{align*}
Equivalently,
\begin{equation}
O(f,u)\leq2\left(  f ^{\ast\ast}(u)-f^{\ast\ast}(2u)\right)  .
\label{eq:osc-drop}%
\end{equation}

\subsection{Fundamental indices and admissible functions}

\label{subsec:fundamental-indices}

We denote by $\mathcal{B}$ the class of continuous functions
\[
\phi:[0,\infty)\to[0,\infty)
\]
such that $\phi(t)>0$ for every $t>0$, and
\[
M_{\phi}(\lambda) := \sup_{s>0}\frac{\phi(\lambda s)}{\phi(s)}%
\]
is finite for every $\lambda>0$.

Given $\phi\in\mathcal{B}$, its lower and upper fundamental indices are
defined by
\begin{equation}
\underline{\beta}_{\phi}= \lim_{\lambda\to0^{+}} \frac{\log M_{\phi}(\lambda
)}{\log\lambda}, \qquad\overline{\beta}_{\phi}= \lim_{\lambda\to\infty}
\frac{\log M_{\phi}(\lambda)}{\log\lambda}. \label{indi}%
\end{equation}

We denote by $\mathcal{A}$ the subclass of $\mathcal{B}$ consisting of
increasing functions satisfying
\[
\phi(0)=0.
\]
If $\phi\in\mathcal{A}$, then
\[
0\leq\underline{\beta}_{\phi}\leq\overline{\beta}_{\phi}\leq\infty.
\]
Finally, $\mathcal{A}_{0}$ denotes the subclass of $\mathcal{A}$ formed by
those functions for which
\[
0<\underline{\beta}_{\phi}\leq\overline{\beta}_{\phi}<\infty.
\]

\begin{remark}
\label{rem0} Given $\phi\in\mathcal{A}_{0}$, we shall use the standard
equivalence
\begin{equation}
\phi(t)\simeq\int_{0}^{t} \phi(s)\,\frac{ds}{s} . \label{eqAo}%
\end{equation}
Since functions in $\mathcal{A}_{0}$ are considered up to equivalence, we
shall always choose convenient representatives. In particular, whenever
inverse functions are used, we choose strictly increasing representatives of
$v$ and $\phi$. Their inverses then belong again to $\mathcal{A}_{0}$.

Moreover, replacing a function $\phi\in\mathcal{A}_{0}$ by the equivalent
representative
\[
\widetilde{\phi}(t):=\int_{0}^{t} \phi(s)\,\frac{ds}{s},
\]
we may also assume, whenever needed, that $\phi$ is differentiable and
satisfies
\[
t\phi^{\prime}(t)\simeq\phi(t),\qquad t>0.
\]
The same convention will be used for $v$. This convention does not affect the
results obtained in the following sections, except for harmless multiplicative constants.
\end{remark}

\subsection{Rearrangement-invariant spaces}

\label{subsec:ri-spaces}

Let $(\Omega,d,\mu)$ be a metric measure space. A Banach linear subspace
$X_{\mu}(\Omega)\subset L^{0}(\mu)$ is called a \emph{Banach function space}
if it satisfies:

\begin{enumerate}
\item[(i)] (\textbf{Lattice property}) If $g\in X_{\mu}(\Omega)$ and $f\in
L^{0}(\mu)$ with $|f|\le|g|$, then $f\in X_{\mu}(\Omega)$ and $\|f\|_{ X_{\mu
}(\Omega)}\le\|g\|_{X_{\mu}(\Omega)}$.

\item[(ii)] (\textbf{Fatou property}) If $0\leq f_{n}\uparrow f$ almost
everywhere, then $\Vert f_{n}\Vert_{X_{\mu}(\Omega)}\uparrow\Vert
f\Vert_{X_{\mu}(\Omega)}$.
\end{enumerate}

A Banach function space $X_{\mu}(\Omega)$ is called
\emph{rearrangement-invariant} (r.i. space in short) if $f^{\ast}=g^{\ast}$
and $f\in X_{\mu}(\Omega)$ imply $g\in X_{\mu}(\Omega)$ and
\[
\Vert g\Vert_{X_{\mu}(\Omega)}=\Vert f\Vert_{X_{\mu}(\Omega)}.
\]
Typical examples are Lebesgue, Lorentz, Marcinkiewicz, Lorentz--Zygmund,
Orlicz and Lorentz--Orlicz spaces.

To simplify notation, we shall usually write $X$ instead of $X_{\mu}(\Omega)$
when the underlying measure space is clear.

For every r.i. space $X$,
\[
L^{\infty}\cap L^{1} \subset X \subset L^{1}+L^{\infty}%
\]
with continuous embeddings.

Every r.i. space $X$ admits a representation space $\overline X=\overline
X(0,\infty)$, defined with respect to Lebesgue measure, such that
\[
\|f\|_{X}=\|f^{*}\|_{\overline X}.
\]
For this standard representation, see \cite[Theorem 4.10 and the subsequent
remarks]{BS}. To simplify notation, we shall write $\|f^{*}\|_{X}$ instead of
$\|f^{*}\|_{\overline X}$ whenever no confusion can arise.

We shall also use the following monotonicity property. If $f,g\in L^{0}(\mu)$
satisfy
\[
\int_{0}^{r} f^{*}(s)\,ds \le\int_{0}^{r} g^{*}(s)\,ds, \qquad r>0,
\]
then
\[
\|f\|_{X}\le\|g\|_{X}%
\]
for every r.i. space $X$.

The associate space $X^{\prime}$ of $X$ is the space of all measurable
functions $h$ such that
\[
\int_{\Omega}|h(x)g(x)|\,d\mu(x)<\infty\qquad\text{for every }g\in X,
\]
endowed with the norm
\[
\|h\|_{X^{\prime}} := \sup_{\|g\|_{X}\le1} \int_{\Omega}|h(x)g(x)|\,d\mu
(x)=\sup_{\|g\|_{X}\le1} \int_{0}^{\infty}h^{*}(s)g^{*}(s)\,ds.
\]
Then $X^{\prime}$ is again an r.i. space, and the H\"{o}lder inequality
\begin{equation}
\label{Hold}\int_{\Omega}|f(x)g(x)|\,d\mu(x) \le\|f\|_{X}\|g\|_{X^{\prime}}%
\end{equation}
holds for every $f\in X$ and $g\in X^{\prime}$.

The fundamental function of an r.i. space $X$ is defined by
\begin{equation}
\varphi_{X}(t) := \|\chi_{E}\|_{X}, \qquad\mu(E)=t. \label{fundamentalf}%
\end{equation}
The definition is independent of the set $E$, by rearrangement invariance. The
function $\varphi_{X}$ is quasi-concave and satisfies the duality relation
\begin{equation}
\label{si}\varphi_{X}(t)\varphi_{X^{\prime}}(t)=t.
\end{equation}

Let $0<p<\infty$, and let $X$ be an r.i. space on $\Omega$. The $p$%
-convexification $X^{(p)}$ is defined by
\[
X^{(p)}:=\{f\in L^{0}(\mu): |f|^{p}\in X\},
\]
with
\[
\|f\|_{X^{(p)}}:=\||f|^{p}\|_{X}^{1/p}.
\]
For simplicity, $X^{(1)}=X$. Notice that $X^{(p)}$ is again a
rearrangement-invariant function space; it may be only quasi-Banach when
$0<p<1$. For example,
\[
(L^{1})^{(p)}=L^{p}.
\]

Classically, conditions on r.i. spaces are given in terms of the Hardy
operators
\[
P^{(q)}f(t) = \left(  \frac1t\int_{0}^{t} |f(x)|^{q}\,dx \right)  ^{1/q},
\qquad Q_{\lambda}^{(q)}f(t) = \left(  \frac1{t^{\lambda}} \int_{t}^{\infty
}|f(x)|^{q}\,\frac{dx}{x^{1-\lambda}} \right)  ^{1/q},
\]
where $0<q<\infty$ and $0\le\lambda<1$.

The boundedness of these operators on r.i. spaces can be described in terms of
the Boyd indices. Let
\[
h_{X}(s):=\Vert E_{s}\Vert_{X\rightarrow X},\qquad s>0,
\]
where $E_{s}$ is the dilation operator on the representation space
$\overline{X}$, defined by
\[
E_{s}f(t)=f\left(  \frac{t}{s}\right)  ,
\]
The Boyd indices of $X$ are the lower and upper fundamental indices of the
dilation function $h_{X}$. Thus
\begin{equation}
\overline{\alpha}_{X}=\inf_{s>1}\frac{\log h_{X}(s)}{\log s},\qquad
\underline{\alpha}_{X}=\sup_{0<s<1}\frac{\log h_{X}(s)}{\log s}.
\label{boydindi}%
\end{equation}
For example, if $X=L^{p}$, $1<p<\infty$, then
\[
\overline{\alpha}_{X}=\underline{\alpha}_{X}=\frac{1}{p}.
\]

It is well known that, see \cite{MMjfsa,MS},
\begin{equation}
\overline{\alpha}_{X}<\frac1q \quad\Longleftrightarrow\quad P^{(q)}\text{ is
bounded on }X, \label{Boydsup}%
\end{equation}
and
\begin{equation}
\underline{\alpha}_{X}>\frac{\lambda}{q} \quad\Longleftrightarrow\quad
Q_{\lambda}^{(q)}\text{ is bounded on }X. \label{Boydinf}%
\end{equation}

\subsection{Haj\l asz gradients and averaged moduli of smoothness}

\label{subsec:hajlasz-gradients-moduli}

Let $\phi\in\mathcal{A}_{0}$ and let $f\in L^{0}(\mu)$. A non-negative
measurable function $g$ is called a \emph{$\phi$--Haj\l asz gradient} of $f$
if
\[
|f(x)-f(y)|\leq\phi(d(x,y))\left(  g(x)+g(y)\right)  \qquad\text{for $\mu
$-a.e. }x,y\in\Omega.
\]
We denote by $D^{\phi}(f)$ the collection of all $\phi$--Haj\l asz gradients
of $f$.

Let $0<p<\infty$. Given $f\in L_{\mathrm{loc}}^{p}(\mu)$ and $r>0$, we define
the averaged moduli of smoothness
\[
\nabla_{r}^{p}f(x):=\left(
\mathchoice {{\setbox0=\hbox{$\displaystyle{\textstyle -}{\int}$} \vcenter{\hbox{$\textstyle -$}}\kern-.5\wd0}} {{\setbox0=\hbox{$\textstyle{\scriptstyle -}{\int}$} \vcenter{\hbox{$\scriptstyle -$}}\kern-.5\wd0}} {{\setbox0=\hbox{$\scriptstyle{\scriptscriptstyle -}{\int}$} \vcenter{\hbox{$\scriptscriptstyle -$}}\kern-.5\wd0}} {{\setbox0=\hbox{$\scriptscriptstyle{\scriptscriptstyle -}{\int}$} \vcenter{\hbox{$\scriptscriptstyle -$}}\kern-.5\wd0}}
\!\int_{B(x,r)}|f(x)-f(y)|^{p}\,d\mu(y)\right)  ^{1/p}.
\]
If $X$ is an r.i. space on $\Omega$, the $X$-modulus of smoothness associated
with $\nabla_{r}^{p}$ is defined by
\[
E_{X}^{p}(f,r):=\Vert\nabla_{r}^{p}f\Vert_{X}.
\]

\begin{remark}
\label{rem1} In the Euclidean setting these definitions recover the usual
Sobolev and Besov objects.

First, Haj\l asz gradients contain the classical Sobolev gradient through the
Hardy--Littlewood maximal operator. More precisely, by the standard pointwise
Sobolev estimate of Haj\l asz \cite{Ha1} and Haj\l asz and Koskela \cite{HK},
if $f\in W_{\mathrm{loc}}^{1,1}(\mathbb{R}^{n})$, then
\[
g(x):=C\,M(|\nabla f|)(x)
\]
is a $\phi$--Haj\l asz gradient of $f$ with $\phi(t)=t$. That is,
\[
|f(x)-f(y)|\leq C\,|x-y|\left(  M(|\nabla f|)(x)+M(|\nabla f|)(y)\right)
\]
for a.e. $x,y\in\mathbb{R}^{n}$.

Second, in the Euclidean setting, let $X=L^{p}(\mathbb{R}^{n})$ and write
\[
E_{p}(f,t):=E_{L^{p}}^{p}(f,t)=\|\nabla_{t}^{p} f\|_{L^{p}(\mathbb{R}^{n})}.
\]
Then
\[
E_{p}(f,t) = \left(  \int_{\mathbb{R}^{n}}
\mathchoice {{\setbox0=\hbox{$\displaystyle{\textstyle -}{\int}$} \vcenter{\hbox{$\textstyle -$}}\kern-.5\wd0}} {{\setbox0=\hbox{$\textstyle{\scriptstyle -}{\int}$} \vcenter{\hbox{$\scriptstyle -$}}\kern-.5\wd0}} {{\setbox0=\hbox{$\scriptstyle{\scriptscriptstyle -}{\int}$} \vcenter{\hbox{$\scriptscriptstyle -$}}\kern-.5\wd0}} {{\setbox0=\hbox{$\scriptscriptstyle{\scriptscriptstyle -}{\int}$} \vcenter{\hbox{$\scriptscriptstyle -$}}\kern-.5\wd0}}
\!\int_{B(x,t)} |f(x)-f(y)|^{p}\,dy\,dx \right)  ^{1/p}.
\]
Since $|B(x,t)|=|B(0,t)|$, the change of variables $y=x+h$ gives
\[
E_{p}(f,t) = \left(
\mathchoice {{\setbox0=\hbox{$\displaystyle{\textstyle -}{\int}$} \vcenter{\hbox{$\textstyle -$}}\kern-.5\wd0}} {{\setbox0=\hbox{$\textstyle{\scriptstyle -}{\int}$} \vcenter{\hbox{$\scriptstyle -$}}\kern-.5\wd0}} {{\setbox0=\hbox{$\scriptstyle{\scriptscriptstyle -}{\int}$} \vcenter{\hbox{$\scriptscriptstyle -$}}\kern-.5\wd0}} {{\setbox0=\hbox{$\scriptscriptstyle{\scriptscriptstyle -}{\int}$} \vcenter{\hbox{$\scriptscriptstyle -$}}\kern-.5\wd0}}
\!\int_{B(0,t)} \|f(\cdot+h)-f(\cdot)\|_{L^{p}(\mathbb{R}^{n})}^{p} \,dh
\right)  ^{1/p}.
\]
Consequently,
\[
E_{p}(f,t) \le\sup_{|h|\le t} \|f(\cdot+h)-f(\cdot)\|_{L^{p}(\mathbb{R}^{n})}
=:\omega_{p}(f,t).
\]
Conversely, the corresponding Besov seminorms defined through $E_{p}(f,t)$ are
equivalent to the classical ones by the standard averaging argument; see, for
instance \cite{GKS}.
\end{remark}

\section{Symmetrization inequalities and lower bounds for the measure}

\label{sec:symmetrization}

Throughout this section, $v\in\mathcal{A}_{0}$ denotes an admissible lower
growth function and $\phi\in\mathcal{A}_{0}$ denotes the Haj\l asz modulus. We
assume that
\begin{equation}
\label{eq:lower-ball-v}h_{\mu}(r):=\inf_{x\in\Omega}\mu(B(x,r))\ge v(r),
\qquad r>0.
\end{equation}

Given a locally integrable function $h$, we denote by $A_{r}h$ the averaging
operator defined by
\[
A_{r}%
h(x):=\mathchoice {{\setbox0=\hbox{$\displaystyle{\textstyle -}{\int}$} \vcenter{\hbox{$\textstyle -$}}\kern-.5\wd0}} {{\setbox0=\hbox{$\textstyle{\scriptstyle -}{\int}$} \vcenter{\hbox{$\scriptstyle -$}}\kern-.5\wd0}} {{\setbox0=\hbox{$\scriptstyle{\scriptscriptstyle -}{\int}$} \vcenter{\hbox{$\scriptscriptstyle -$}}\kern-.5\wd0}} {{\setbox0=\hbox{$\scriptscriptstyle{\scriptscriptstyle -}{\int}$} \vcenter{\hbox{$\scriptscriptstyle -$}}\kern-.5\wd0}}
\!\int_{B(x,r)}h(y)\,d\mu(y),\qquad r>0.
\]

\begin{theorem}
\label{thm:symmetrization} Let $0<p<\infty$, and set
\[
p_{0}:=\min\{1,p\}.
\]
Then, for every $f\in L^{p_{0}}(\mu)+L^{\infty}(\mu)$, and every $t>0$, the
following estimates hold.

\begin{enumerate}
\item
\begin{equation}
\label{eq:symmetrization-gradient}O(|f|^{p_{0}},t)^{1/p_{0}} \preceq\left(
\left[  \nabla_{v^{-1}(2t)}^{p}f \right]  ^{p} \right)  ^{**}(t)^{1/p}.
\end{equation}

\item Let $\phi\in\mathcal{A}_{0}$, and set
\[
\omega(t):=\phi(v^{-1}(t)),\qquad t>0.
\]
If $g\in D^{\phi}(f)$, then
\begin{equation}
\label{eq:symmetrization-hajlasz-root}O(|f|^{p_{0}},t)^{1/p_{0}} \preceq
\omega(t)\,(g^{p})^{**}(t)^{1/p}.
\end{equation}
Equivalently,
\begin{equation}
\label{eq:symmetrization-hajlasz}O(|f|^{p_{0}},t) \preceq\omega(t)^{p_{0}%
}\,(g^{p})^{**}(t)^{p_{0}/p}.
\end{equation}

\end{enumerate}
\end{theorem}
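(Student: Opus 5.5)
The plan is to prove part (1) by a direct pointwise argument on a level set of $u:=|f|^{p_{0}}$, and then to deduce part (2) from part (1) by bounding $\nabla^{p}_{r}f$ pointwise through a Haj\l asz gradient. Fix $t>0$ and set $r:=v^{-1}(2t)$. By \eqref{eq:lower-ball-v}, every ball $B(x,r)$ satisfies $\mu(B(x,r))\ge v(r)=2t$.

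For part (1), since $f\in L^{p_{0}}+L^{\infty}$, the value $u^{*}(t)$ is finite. Since $\mu$ is nonatomic, I can choose a set $E$ with $\mu(E)=t$ and
\[
\{u>u^{*}(t)\}\subset E\subset\{u\ge u^{*}(t)\},
\]
so that by \eqref{hard} we have $\int_{E}u\,d\mu=\int_{0}^{t}u^{*}=t\,u^{**}(t)$. Let $G:=\{u\le u^{*}(t)\}$. Its complement has measure at most $t$, so for every $x$ we get $\mu(B(x,r)\cap G)\ge\mu(B(x,r))-t\ge\frac12\mu(B(x,r))$.

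Take $x\in E$ and $y\in B(x,r)\cap G$. Since $p_{0}\le1$, the map $s\mapsto s^{p_{0}}$ is subadditive, and therefore
\[
u(x)-u^{*}(t)\le u(x)-u(y)\le|f(x)-f(y)|^{p_{0}}.
\]
Averaging in $y$ over $B(x,r)\cap G$ and using Jensen's inequality (valid because $p_{0}\le p$) gives
\[
u(x)-u^{*}(t)\le\Bigl(\frac{1}{\mu(B\cap G)}\int_{B\cap G}|f(x)-f(y)|^{p}d\mu(y)\Bigr)^{p_{0}/p}\le2^{p_{0}/p}\,[\nabla_{r}^{p}f(x)]^{p_{0}}.
\]
Integrating over $E$ and dividing by $t$ yields
\[
O(u,t)=\frac1t\int_{E}(u-u^{*}(t))\,d\mu\preceq\frac1t\int_{E}[\nabla_{r}^{p}f]^{p_{0}}\,d\mu\le\bigl([\nabla_{r}^{p}f]^{p_{0}}\bigr)^{**}(t).
\]
Applying \eqref{hard} and then Jensen on $(0,t)$ (since $p_{0}/p\le1$) bounds the right-hand side by $\bigl(([\nabla_{r}^{p}f]^{p})^{**}(t)\bigr)^{p_{0}/p}$. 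This is \eqref{eq:symmetrization-gradient}.

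For part (2), let $g\in D^{\phi}(f)$. For a.e.\ $x$ and a.e.\ $y\in B(x,r)$ the Haj\l asz inequality holds, and $\phi(d(x,y))\le\phi(r)$ because $\phi$ is increasing. Hence
\[
[\nabla_{r}^{p}f(x)]^{p}\preceq\phi(r)^{p}\Bigl(g(x)^{p}+\frac{1}{\mu(B(x,r))}\int_{B(x,r)}g^{p}\,d\mu\Bigr),
\]
using $(a+b)^{p}\le c_{p}(a^{p}+b^{p})$. Since $\mu(B(x,r))\ge2t$, \eqref{hard} shows that the ball average is at most $\frac{1}{2t}\int_{0}^{2t}(g^{p})^{*}=(g^{p})^{**}(2t)\le(g^{p})^{**}(t)$, uniformly in $x$. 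Taking $(\cdot)^{**}(t)$ of the pointwise bound gives $([\nabla_{r}^{p}f]^{p})^{**}(t)\preceq\phi(r)^{p}(g^{p})^{**}(t)$. Inserting this into part (1) gives the result with the factor $\phi(v^{-1}(2t))$. Finally, $v^{-1}\in\mathcal{A}_{0}$ by Remark~\ref{rem0}, so $v^{-1}(2t)\le M_{v^{-1}}(2)\,v^{-1}(t)$. Because $\phi$ is increasing, $\phi(v^{-1}(2t))\le\phi\bigl(M_{v^{-1}}(2)\,v^{-1}(t)\bigr)\le M_{\phi}\bigl(M_{v^{-1}}(2)\bigr)\,\omega(t)$, so $\phi(v^{-1}(2t))\preceq\omega(t)$. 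This yields \eqref{eq:symmetrization-hajlasz-root}, and raising to the power $p_{0}$ gives \eqref{eq:symmetrization-hajlasz}.

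The delicate step is the choice of $E$ and $G$ in part (1). It requires nonatomicity, so that $\mu(E)=t$ exactly and $\int_{E}u=tu^{**}(t)$, and it requires the doubled radius $v^{-1}(2t)$, which guarantees that each ball meets $G$ in at least half its measure. That factor of $2$ is what later forces the comparison $\phi(v^{-1}(2t))\simeq\omega(t)$ in part (2). A further technical point is that the a.e.\ Haj\l asz inequality must hold for a.e.\ $y$ in the ball for a.e.\ $x$, which follows by Fubini.
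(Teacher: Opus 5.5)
Your route for part (1) is genuinely different from the paper's and works, but one claim in it is false as written. A set $E$ with $\mu(E)=t$ and $\{u>u^{*}(t)\}\subset E\subset\{u\ge u^{*}(t)\}$ exists only if $\mu\{u\ge u^{*}(t)\}\ge t$, and nonatomicity does not give this. The condition fails when $u^{*}(t)=\lim_{s\to\infty}u^{*}(s)>0$, which is allowed here because $\mu(\Omega)=\infty$ and $L^{\infty}\subset L^{p_0}+L^{\infty}$. For example, on $\mathbb{R}$ with Lebesgue measure, take $p\ge1$, $t=2$ and
\[
f=2\chi_{[0,1]}+\Bigl(1-\frac{1}{1+|x|}\Bigr)\chi_{\mathbb{R}\setminus[0,1]} .
\]
Then $f^{*}(2)=1$, the set $\{|f|\ge1\}=[0,1]$ has measure $1<2$, and $O(f,2)=\tfrac12$, so the case is not trivial.

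The repair is free, because you never use $x\in E$. For every $x$ and every $y\in G$ one has $u(x)-u^{*}(t)\le u(x)-u(y)\le|f(x)-f(y)|^{p_0}$. Your averaging step therefore gives
\[
u(x)-u^{*}(t)\le 2^{p_0/p}\,[\nabla_{r}^{p}f(x)]^{p_0}\qquad\text{for all }x .
\]
Integrate this over an arbitrary $E$ with $\mu(E)=t$ and take the supremum using \eqref{hard}. This yields $t\,O(u,t)\le 2^{p_0/p}\,t\,([\nabla_{r}^{p}f]^{p_0})^{**}(t)$, and the rest of your argument is unchanged.

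With this correction, here is how the two proofs compare. The paper averages $|f(x)|^{p_0}\le|f(x)-f(y)|^{p_0}+|f(y)|^{p_0}$ over the whole ball and bounds $A_r(|f|^{p_0})(x)$ by $(|f|^{p_0})^{**}(2t)$. This gives $u^{**}(t)-u^{**}(2t)\le([\nabla_r^pf]^{p})^{**}(t)^{p_0/p}$. To reach the oscillation it then needs the drop inequality \eqref{eq:osc-drop}, that is, the monotonicity of $s\,O(u,s)$, and it splits into the cases $p\le1$ and $p>1$.

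You instead compare $u(x)$ directly with the level $u^{*}(t)$ on the sublevel set $G$. This set fills at least half of every ball, since $\mu(G^{c})\le t\le\frac12\mu(B(x,r))$. As a result you obtain $O(u,t)$ in one step, without \eqref{eq:osc-drop}, and you treat all $p$ at once through $u=|f|^{p_0}$ and Jensen; the constants are of the same order. Your part (2) is the paper's argument, with the dilation step $\phi(v^{-1}(2t))\preceq\omega(t)$ made explicit through $M_{v^{-1}}(2)$ and $M_{\phi}$.
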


\begin{proof}
We prove the two estimates separately.

\medskip

\noindent\textit{Proof of (i).} Fix $t>0$ and set
\[
r:=v^{-1}(2t).
\]
By (\ref{eq:lower-ball-v}),
\[
\mu(B(x,r))\geq v(r)=2t,\qquad x\in\Omega.
\]

We distinguish two cases.

Assume first that $0<p\le1$. Then $p_{0}=p$. Since
\[
|f(x)|^{p}\leq|f(x)-f(y)|^{p}+|f(y)|^{p},
\]
averaging this inequality over $B(x,r)$, we obtain
\[
|f(x)|^{p} \leq[\nabla_{r}^{p}f(x)]^{p}+A_{r}(|f|^{p})(x).
\]
By (\ref{hard}), using that $2t\leq\mu(B(x,r))$ and that $(|f|^{p})^{**}$ is
decreasing, we get
\begin{align}
A_{r}(|f|^{p})(x)  &  \leq(|f|^{p})^{**}(\mu(B(x,r)))\label{eq:media}\\
&  \leq(|f|^{p})^{**}(2t).\nonumber
\end{align}
Hence
\begin{equation}
\label{eq:sym-proof-first-bound}|f(x)|^{p} \leq[\nabla_{r}^{p}f(x)]^{p} +
(|f|^{p})^{**}(2t).
\end{equation}

Let $E\subset\Omega$ be measurable with $\mu(E)=t$. Integrating
(\ref{eq:sym-proof-first-bound}) over $E$, we get
\[
\int_{E}|f(x)|^{p}\,d\mu(x) \leq\int_{E}[\nabla_{r}^{p}f(x)]^{p}\,d\mu(x) +
t(|f|^{p})^{**}(2t).
\]
Taking the supremum over all such $E$, and using (\ref{hard}), yields
\[
t(|f|^{p})^{**}(t) \leq t\left(  [\nabla_{r}^{p}f]^{p}\right)  ^{**}(t) +
t(|f|^{p})^{**}(2t).
\]
Therefore
\[
(|f|^{p})^{**}(t)-(|f|^{p})^{**}(2t) \leq\left(  [\nabla_{r}^{p}f]^{p}\right)
^{**}(t).
\]
Using (\ref{eq:osc-drop}), we conclude that
\[
O(|f|^{p},t) \preceq\left(  [\nabla_{r}^{p}f]^{p}\right)  ^{**}(t).
\]
Since $p_{0}=p$ and $r=v^{-1}(2t)$, this gives
\[
O(|f|^{p_{0}},t)^{1/p_{0}} \preceq\left(  \left[  \nabla_{v^{-1}(2t)}^{p}f
\right]  ^{p} \right)  ^{**}(t)^{1/p}.
\]

Assume now that $p>1$. Then $p_{0}=1$. Averaging the triangle inequality over
$B(x,r)$, and using H\"older's inequality, gives
\[
|f(x)| \leq\nabla_{r}^{p}f(x) + A_{r}(|f|)(x).
\]
Again, by (\ref{hard}), using that $2t\leq\mu(B(x,r))$ and that $f^{**}$ is
decreasing,
\[
A_{r}(|f|)(x) \leq f^{**}(\mu(B(x,r))) \leq f^{**}(2t).
\]
Hence
\begin{equation}
\label{eq:sym-proof-first-bound-pgreater1}|f(x)| \leq\nabla_{r}^{p}%
f(x)+f^{**}(2t).
\end{equation}
Let $E\subset\Omega$ be measurable with $\mu(E)=t$. Integrating
(\ref{eq:sym-proof-first-bound-pgreater1}) over $E$, taking the supremum over
all such $E$, and using (\ref{hard}), we obtain
\[
t f^{**}(t) \leq t(\nabla_{r}^{p}f)^{**}(t) + t f^{**}(2t).
\]
Thus
\[
f^{**}(t)-f^{**}(2t) \leq(\nabla_{r}^{p}f)^{**}(t).
\]
By H\"older's inequality in rearranged form,
\[
(\nabla_{r}^{p}f)^{**}(t) \leq\left(  \left[  \nabla_{r}^{p}f \right]  ^{p}
\right)  ^{**}(t)^{1/p}.
\]
Using again (\ref{eq:osc-drop}), we get
\[
O(f,t) \preceq\left(  \left[  \nabla_{r}^{p}f \right]  ^{p} \right)
^{**}(t)^{1/p}.
\]
Since $p_{0}=1$ and $r=v^{-1}(2t)$, this is precisely
(\ref{eq:symmetrization-gradient}).

\medskip

\noindent\textit{Proof of (ii).} Let $g\in D^{\phi}(f)$. By definition,
\[
|f(x)-f(y)| \leq\phi(d(x,y))\left(  g(x)+g(y)\right)
\]
for $\mu$-a.e. $x,y\in\Omega$.

Fix again $t>0$ and set
\[
r:=v^{-1}(2t).
\]
If $y\in B(x,r)$, then $d(x,y)\leq r$, and since $\phi$ is increasing,
\[
\phi(d(x,y))\leq\phi(r).
\]
Thus
\[
|f(x)-f(y)|^{p} \leq\phi(r)^{p}\left(  g(x)+g(y)\right)  ^{p}.
\]
Using
\[
(g(x)+g(y))^{p}\preceq g(x)^{p}+g(y)^{p},
\]
and averaging over $B(x,r)$, we obtain
\[
[\nabla_{r}^{p}f(x)]^{p} \preceq\phi(r)^{p}\left(  g(x)^{p}+A_{r}%
(g^{p})(x)\right)  .
\]
As in (\ref{eq:media}),
\[
A_{r}(g^{p})(x)\leq(g^{p})^{**}(2t).
\]
Therefore
\[
[\nabla_{r}^{p}f(x)]^{p} \preceq\phi(r)^{p}\left(  g(x)^{p}+(g^{p}%
)^{**}(2t)\right)  .
\]
Integrating the previous inequality over $E$ with $\mu(E)=t$, and taking the
supremum over all such $E$, we get
\[
\left(  [\nabla_{r}^{p}f]^{p}\right)  ^{**}(t) \preceq\phi(r)^{p}(g^{p}%
)^{**}(t).
\]
Combining this estimate with (\ref{eq:symmetrization-gradient}), we obtain
\[
O(|f|^{p_{0}},t)^{1/p_{0}} \preceq\phi(v^{-1}(2t))(g^{p})^{**}(t)^{1/p}.
\]
Since $v,\phi\in\mathcal{A}_{0}$, the function $\phi\circ v^{-1}$ is stable
under fixed dilations. Hence
\[
\phi(v^{-1}(2t))\simeq\phi(v^{-1}(t)).
\]
Setting
\[
\omega(t):=\phi(v^{-1}(t)),\qquad t>0,
\]
we arrive at
\[
O(|f|^{p_{0}},t)^{1/p_{0}} \preceq\omega(t)(g^{p})^{**}(t)^{1/p},
\]
which proves (\ref{eq:symmetrization-hajlasz-root}). The equivalent form
(\ref{eq:symmetrization-hajlasz}) follows by raising both sides to the power
$p_{0}$.
\end{proof}

\medskip

The next result shows that the lower ball estimate is also encoded in the
oscillation inequalities. More precisely, the validity of the preceding
symmetrization estimates, uniformly over the relevant class of functions,
implies (\ref{eq:lower-ball-v}), up to constants.

\begin{theorem}
\label{teo1-converse} Let $(\Omega,d,\mu)$ be a metric measure space, let
$0<p<\infty$, set
\[
p_{0}:=\min\{1,p\},
\]
and let $v\in\mathcal{A}_{0}$.

\begin{enumerate}
\item Assume that there exists a constant $C>0$ such that, for every $f\in
L^{p_{0}}(\mu)+L^{\infty}(\mu)$ and every $t>0$,
\begin{equation}
O(|f|^{p_{0}},t)^{1/p_{0}} \leq C\left(  [\nabla_{v^{-1}(t)}^{p}f]^{p}\right)
^{\ast\ast}(t)^{1/p}. \label{c1}%
\end{equation}
Then there exists a constant $K>0$ such that
\[
\inf_{x\in\Omega}\mu(B(x,R))\geq K\,v(R),\qquad R>0.
\]

\item Let $\phi\in\mathcal{A}_{0}$ be increasing and concave, chosen as a
strictly increasing representative. Assume that there exists a constant $C>0$
such that, for every $f\in L^{p_{0}}(\mu)+L^{\infty}(\mu)$, every $g\in
D^{\phi}(f)$, and every $t>0$,
\begin{equation}
O(|f|^{p_{0}},t)^{1/p_{0}} \leq C\,\phi(v^{-1}(t))\left(  (g^{p})^{\ast\ast
}(t)\right)  ^{1/p}. \label{eq:ass2}%
\end{equation}
Then there exists a constant $K_{0}>0$ such that
\[
\inf_{x\in\Omega}\mu(B(x,R))\geq K_{0}\,v(R),\qquad R>0.
\]

\end{enumerate}
\end{theorem}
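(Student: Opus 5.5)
The plan is to test the hypotheses (\ref{c1}) and (\ref{eq:ass2}) on cutoff functions concentrated on a ball. A pointwise lower bound for the oscillation should then be played against an upper bound for the right-hand side, and the comparison will force $\mu(B(x_0,R))\succeq v(R)$. For a fixed centre $x_0$ write $m(r):=\mu(B(x_0,r))$.

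The basic computation is the following. If $F\ge0$ is bounded by $1$, supported in a set of measure $M$, and equal to $1$ on a set of measure $m_1$, then for $t\ge M$ we have $F^{\ast}(t)=0$. Hence
\[
O(F^{p_0},t)=(F^{p_0})^{\ast\ast}(t)=\frac1t\int_\Omega F^{p_0}\,d\mu\ \ge\ \frac{m_1}{t}.
\]
Similarly, any $G$ with $0\le G\le A$ supported in a set of measure $M'$ satisfies $(G^p)^{\ast\ast}(t)\le A^p\min\{1,M'/t\}$. So everything reduces to choosing a cutoff with a small smoothness quantity on a controlled support.

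For (i), take $f=\chi_{B(x_0,R)}$ and the radius $s=v^{-1}(t)$. Then $[\nabla^p_s f(x)]^p\le1$, and it vanishes unless $x\in B(x_0,R+s)$. For (ii), use the concavity of $\phi$, which gives subadditivity $\phi(a+b)\le\phi(a)+\phi(b)$. Take $f(x)=\bigl(\phi(R)-\phi(d(x,x_0))\bigr)_+$. This satisfies $|f(x)-f(y)|\le\phi(d(x,y))$, and one checks that $g=\chi_{B(x_0,2R)}$, up to a factor depending only on $M_\phi(2)$, is a $\phi$-Haj\l asz gradient. The point is that if both $x,y\notin B(x_0,R)$ the difference vanishes, while if $x\in B(x_0,R)$ and $y\notin B(x_0,2R)$ then $|f(x)-f(y)|\le\phi(R)\le\phi(d(x,y))$, so a single nonzero $g(x)$ suffices. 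Moreover $f\ge\phi(R)-\phi(R/2)\succeq\phi(R)$ on $B(x_0,R/2)$, because $\phi\in\mathcal A_0$ forces a strict gain between $R/2$ and $R$ up to constants. Plugging into (\ref{eq:ass2}) with $t\ge m(2R)$, one gets, after normalising $f$ by $\phi(R)$ and raising to the appropriate powers,
\[
\frac{m(R/2)}{t}\ \preceq\ \Bigl(\frac{\phi(v^{-1}(t))}{\phi(R)}\Bigr)^{p_0}\Bigl(\frac{m(2R)}{t}\Bigr)^{p_0/p}.
\]
The analogous relation from (\ref{c1}) is the same with the $\phi$-ratio replaced by $1$ and with $m(R+v^{-1}(t))$ in place of $m(2R)$.

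The remaining step, which I expect to be the main obstacle, is to turn these relations between balls of different radii into a lower bound for a single ball without any doubling hypothesis. I would proceed by contradiction. Suppose $m(R)\le\varepsilon v(R)$ for some centre $x_0$ and radius $R$, with $\varepsilon$ small. First use the freedom in $t$. Choosing $t$ comparable to $v(R)$ makes $\phi(v^{-1}(t))\simeq\phi(R)$, since $v,\phi\in\mathcal A_0$. Choosing $t$ much larger than $m(2R)$ makes the factor $(m(2R)/t)^{p_0/p}$ decay at a different rate from $m(R/2)/t$; this rate mismatch is where the exponents $p_0$ and $p$ must genuinely enter. Second, use the infimum in $h_\mu$: pick the centre $x_0$ and a dyadic scale $R$ at which $m(r)/v(r)$ is nearly minimal. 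Because $v$ has finite upper index, the ratios $m(2R)/v(2R)$ and $m(R/2)/v(R/2)$ are then comparable to the minimal one, and the displayed inequality closes into a contradiction. If the infimum over scales is not attained in a usable way, I would instead iterate the inequality along the radii $2^{-k}R$. Admissibility of $v$ ($\underline\beta_v>0$) then yields a geometric improvement that is incompatible with $m(r)\to\mu(\{x_0\})=0$ being compensated by $v(r)\to0$ at a fixed rate. The same scheme, with the $\phi$-ratio set to $1$, handles part (i).
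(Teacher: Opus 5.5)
\textbf{There is a genuine gap.} Your test functions for (ii) are legitimate; indeed $\chi_{B(x_0,R)}$ alone is already a $\phi$-Haj\l asz gradient of your $f$. The trouble is that you only evaluate the hypotheses at $t\ge\mu(\operatorname{supp}f)$, where $O(F^{p_0},t)$ is a plain average. That can only compare $m$ at different radii, and your closing step does not work.

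\emph{Part (i).} The indicator $\chi_{B(x_0,R)}$ loses all information. The quantity $[\nabla^p_s\chi_{B(x_0,R)}]^p$ is of order one near $\partial B$ for every $s$. So with the $\phi$-ratio set to $1$ your relation reads $m(R)\preceq m(R+v^{-1}(t))$ when $p\le1$, and it is equally trivial when $p>1$. The smallness of the scale $v^{-1}(t)$ never enters.

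\emph{The choice of $t$.} The ``rate mismatch'' goes the wrong way. For $p>1$, $(m(2R)/t)^{1/p}$ decays more slowly than $m(R/2)/t$, so enlarging $t$ only weakens the inequality; for $p\le1$ there is no mismatch. The best choice is $t=m(2R)$, which gives $m(R/2)\preceq\bigl(\phi(v^{-1}(m(2R)))/\phi(R)\bigr)^{p_0}m(2R)$. This only says that if $m/v$ is small at one scale, it is much smaller at a quarter of that scale.

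\emph{Closing the argument.} The near-minimizer argument can at best exclude a small \emph{positive} value of $\inf_{x,r}\mu(B(x,r))/v(r)$, never the value $0$. Also, your claim that $m(2R)/v(2R)$ is then comparable to the minimum is unjustified: the upper index of $v$ controls $v(2R)/v(R)$, not $m(2R)/m(R)$. Iterating along radii $2^{-k}R\to0$ only yields very fast decay of $m(2^{-k}R)/v(2^{-k}R)$. That is not a contradiction, since nonatomicity gives $\mu(B(x_0,r))\to0$ with no rate whatsoever.

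\textbf{The missing idea is to use the hypothesis for all small $t$, not only large $t$.} If $u$ is supported in $B=B(x_0,a)$, $m=\mu(B)$ and $\|u\|_{L^\infty}=A$, then $(|u|^{p_0})^{\ast\ast}(0^+)=A^{p_0}$ while $(|u|^{p_0})^{\ast\ast}(2m)\le A^{p_0}/2$. Hence, by \eqref{der2est},
\[
\frac{A^{p_0}}{2}\le\int_0^{2m}O(|u|^{p_0},s)\,\frac{ds}{s}.
\]
Take $u=\phi((a-d(x_0,\cdot))_+)$ and $g=\chi_B$ in \eqref{eq:ass2}, and use only the trivial bound $(g^p)^{\ast\ast}\le1$. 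Then \eqref{eqAo} applied to $(\phi\circ v^{-1})^{p_0}$ gives $\phi(a)\preceq\phi(v^{-1}(2m))$, hence $m\succeq v(a)$ for that single ball, with no comparison between radii.

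Part (i) goes the same way with the Lipschitz cutoff $(R-d(x_0,\cdot))_+$. For it, $[\nabla^p_s u]^p\preceq s^p(\chi_B+A_s\chi_B)$, and this is exactly where the factor $v^{-1}(t)$ comes from; an indicator cannot produce it.

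If you want to keep your large-$t$ relations instead, you must replace radii tending to $0$ by the thin-annulus cutoffs \eqref{eq:dyadic-cutoffs}, whose radii decrease to $r/2>0$. Then $a_N^{\gamma^N}\to1$ in the iteration, which makes it close, at least in the power model.
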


\begin{proof}
We shall use the following elementary observation in both parts of the proof.
Let $B=B(x_{0},a)$, $m=\mu(B)$, and let $u$ be supported in $B$ with
\[
\|u\|_{L^{\infty}}=A.
\]
Then $(|u|^{p_{0}})^{*}(s)=0$ for $s>m$, and hence
\[
(|u|^{p_{0}})^{**}(2m) = \frac1{2m}\int_{0}^{2m}(|u|^{p_{0}})^{*}(s)\,ds =
\frac1{2m}\int_{0}^{m}(|u|^{p_{0}})^{*}(s)\,ds \le\frac{A^{p_{0}}}{2}.
\]
On the other hand,
\[
\lim_{t\to0^{+}}(|u|^{p_{0}})^{**}(t) = \|u\|_{L^{\infty}}^{p_{0}} = A^{p_{0}%
}.
\]
Therefore
\[
\frac{A^{p_{0}}}{2} \le\lim_{t\to0^{+}}(|u|^{p_{0}})^{**}(t) - (|u|^{p_{0}%
})^{**}(2m).
\]
Using
\[
\frac{d}{dt}(|u|^{p_{0}})^{**}(t) = -\frac{O(|u|^{p_{0}},t)}{t},
\]
we get
\begin{equation}
\frac{A^{p_{0}}}{2} \le\int_{0}^{2m}O(|u|^{p_{0}},s)\,\frac{ds}{s}.
\label{eq:common-osc-integral}%
\end{equation}

\medskip

\noindent\textit{Proof of (i).} Fix $x_{0}\in\Omega$ and $R>0$. Set
\[
B:=B(x_{0},R),\qquad m:=\mu(B),
\]
and define
\[
u(y):=(R-d(x_{0},y))_{+}.
\]
Then $u$ is supported in $B$ and $\Vert u\Vert_{L^{\infty}}=R$. Applying
(\ref{eq:common-osc-integral}) with $A=R$, and then using (\ref{c1}), we
obtain
\begin{equation}
R^{p_{0}} \preceq C^{p_{0}}\int_{0}^{2m} \left(  \left(  [\nabla_{v^{-1}%
(s)}^{p}u]^{p}\right)  ^{\ast\ast}(s) \right)  ^{p_{0}/p} \,\frac{ds}{s}.
\label{eq:conv-main-1}%
\end{equation}
Now, we estimate the integrand. Since
\[
u(y)=(R-d(x_{0},y))_{+},
\]
one has
\[
|u(x)-u(y)|\leq d(x,y)\left(  \chi_{B}(x)+\chi_{B}(y)\right)  ,\qquad
x,y\in\Omega.
\]
Hence, for every $0<p<\infty$,
\[
|u(x)-u(y)|^{p} \preceq d(x,y)^{p}\left(  \chi_{B}(x)+\chi_{B}(y)\right)  .
\]
Therefore
\[
\lbrack\nabla_{v^{-1}(s)}^{p}u]^{p}(x) \preceq v^{-1}(s)^{p}\left(  \chi
_{B}(x)+A_{v^{-1}(s)}\chi_{B}(x)\right)  .
\]
Applying the $^{\ast\ast}$-operator and using subadditivity,
\[
\left(  \lbrack\nabla_{v^{-1}(s)}^{p}u]^{p}\right)  ^{\ast\ast}(s) \preceq
v^{-1}(s)^{p}\left[  (\chi_{B})^{\ast\ast}(s) + (A_{v^{-1}(s)}\chi_{B}%
)^{\ast\ast}(s) \right]  .
\]
Since $0\leq\chi_{B}\leq1$ and $0\leq A_{v^{-1}(s)}\chi_{B}\leq1$, we have
\[
(\chi_{B})^{\ast\ast}(s)\leq1,\qquad(A_{v^{-1}(s)}\chi_{B})^{\ast\ast}%
(s)\leq1.
\]
Hence
\begin{equation}
\left(  \lbrack\nabla_{v^{-1}(s)}^{p}u]^{p}\right)  ^{\ast\ast}(s) \preceq
v^{-1}(s)^{p}. \label{eq:conv-nabla-final-1}%
\end{equation}
Substituting (\ref{eq:conv-nabla-final-1}) into (\ref{eq:conv-main-1}), we
get
\[
R^{p_{0}} \preceq C^{p_{0}}\int_{0}^{2m}v^{-1}(s)^{p_{0}}\,\frac{ds}{s}.
\]
Since $v^{-1}\in\mathcal{A}_{0}$, by (\ref{eqAo}), we have
\[
\int_{0}^{2m}v^{-1}(s)^{p_{0}}\,\frac{ds}{s} \preceq v^{-1}(2m)^{p_{0}}.
\]
Thus
\[
R^{p_{0}}\preceq C^{p_{0}}v^{-1}(2m)^{p_{0}}.
\]
Hence there exists $c>0$, independent of $x_{0}$ and $R$, such that
\[
v^{-1}(2m)\geq cR.
\]
Applying $v$, and absorbing fixed dilations since $v\in\mathcal{A}_{0}$, we
obtain
\[
2m\geq v(cR)\simeq v(R).
\]
Consequently,
\[
\mu(B(x_{0},R))=m\geq Kv(R).
\]
Since $x_{0}\in\Omega$ and $R>0$ were arbitrary, this proves
\[
\inf_{x\in\Omega}\mu(B(x,R))\geq Kv(R),\qquad R>0.
\]

\medskip

\noindent\textit{Proof of (ii).} Fix $x_{0}\in\Omega$ and $a>0$. Set
\[
B:=B(x_{0},a),\qquad m:=\mu(B),
\]
and define
\[
u(y):=\phi\bigl((a-d(x_{0},y))_{+}\bigr),\qquad\rho:=\chi_{B}.
\]
Since $\phi$ is increasing, concave, and satisfies $\phi(0)=0$, it is
subadditive. Hence, for all $\alpha,\beta\ge0$,
\[
|\phi(\alpha)-\phi(\beta)| \le\phi(|\alpha-\beta|).
\]
On the other hand,
\[
\bigl|(a-d(x_{0},x))_{+}-(a-d(x_{0},y))_{+}\bigr| \le|d(x_{0},x)-d(x_{0},y)|
\le d(x,y).
\]
Combining these two estimates, we obtain
\[
|u(x)-u(y)| \le\phi(d(x,y)).
\]
Since $u$ vanishes outside $B(x_{0},a)$, this yields
\[
|u(x)-u(y)| \le\phi(d(x,y))\bigl(\rho(x)+\rho(y)\bigr), \qquad x,y\in\Omega.
\]
Hence
\[
\rho\in D^{\phi}(u).
\]

The function $u$ is supported in $B$ and
\[
\Vert u\Vert_{L^{\infty}}=\phi(a).
\]
Applying (\ref{eq:common-osc-integral}) with $A=\phi(a)$, and then using
(\ref{eq:ass2}) with $f=u$ and $g=\rho$, we obtain
\[
\phi(a)^{p_{0}} \preceq C^{p_{0}}\int_{0}^{2m} \phi(v^{-1}(s))^{p_{0}} \left(
(\rho^{p})^{\ast\ast}(s)\right)  ^{p_{0}/p} \,\frac{ds}{s}.
\]
Since $\rho^{p}=\rho$ and
\[
\rho^{\ast\ast}(s)=\min\left(  1,\frac{m}{s}\right)  \leq1,
\]
we get
\[
\phi(a)^{p_{0}} \preceq C^{p_{0}}\int_{0}^{2m}\phi(v^{-1}(s))^{p_{0}}%
\,\frac{ds}{s}.
\]
Since $\phi\circ v^{-1}\in\mathcal{A}_{0}$, by (\ref{eqAo})
\[
\int_{0}^{2m}\phi(v^{-1}(s))^{p_{0}}\,\frac{ds}{s} \preceq\phi(v^{-1}%
(2m))^{p_{0}}.
\]
Therefore
\[
\phi(a)^{p_{0}} \preceq C^{p_{0}}\phi(v^{-1}(2m))^{p_{0}}.
\]
Hence there exists $c>0$, independent of $x_{0}$ and $a$, such that
\[
\phi(v^{-1}(2m))\geq c\,\phi(a).
\]
Since $\phi$ is strictly increasing,
\[
v^{-1}(2m)\geq\phi^{-1}(c\,\phi(a)).
\]
Since $\phi^{-1}\in\mathcal{A}_{0}$, fixed dilations are controlled by the
fundamental indices. Hence there exists $c_{1}>0$, independent of $a$, such
that
\[
\phi^{-1}(c\,\phi(a))\ge c_{1} a .
\]
Therefore
\[
v^{-1}(2m)\ge c_{1} a .
\]
Applying $v$, and absorbing fixed dilations once more, we obtain
\[
2m\geq v(c_{1}a)\simeq v(a).
\]
Consequently,
\[
\mu(B(x_{0},a))=m\geq K_{0}v(a).
\]
Since $x_{0}\in\Omega$ and $a>0$ were arbitrary, this proves
\[
\inf_{x\in\Omega}\mu(B(x,R))\geq K_{0}v(R),\qquad R>0.
\]

\end{proof}

In the next section we use the preceding symmetrization estimates to obtain
embedding results for averaged Besov and Haj\l asz--Sobolev type spaces.

\section{Function spaces and oscillation reductions}

\label{sec:oscillation-reductions}

We now introduce the function spaces that will be used in the
embedding results. The purpose of the section is twofold. First, we
fix the precise homogeneous seminorms and non-homogeneous norms
associated with Haj\l asz gradients and with averaged moduli of
smoothness. Second, we show how the symmetrization estimates
obtained above reduce the corresponding embedding problems to
estimates for oscillation functionals.

In these definitions, the relevant smoothness quantity is measured directly in
the given rearrangement-invariant space $X$: either a Haj\l asz gradient, or
an averaged modulus of smoothness.

\begin{definition}
\label{def:embedding-spaces} Let $\phi\in\mathcal{A}_{0}$, let $X$ be a
rearrangement-invariant space on $\Omega$.

\begin{enumerate}
\item The homogeneous $\phi$--Haj\l asz--Sobolev space $\dot{M}^{\phi
,X}(\Omega)$ consists of all $f\in L^{0}(\mu)$ such that
\[
\Vert f\Vert_{\dot{M}^{\phi,X}}:=\inf_{g\in D^{\phi}(f)}\Vert g\Vert
_{X}<\infty.
\]
The corresponding non-homogeneous space is
\[
M^{\phi,X}(\Omega):=\dot{M}^{\phi,X}(\Omega)\cap X,
\]
with norm
\[
\Vert f\Vert_{M^{\phi,X}}:=\Vert f\Vert_{\dot{M}^{\phi,X}}+\Vert f\Vert_{X}.
\]

\item Let $0<p<\infty,$ $0<q\leq\infty$ and $0<\theta<\infty$. The homogeneous
averaged Besov space $\dot{\mathcal{B}}_{X,q}^{\theta,p}(\Omega)$ consists of
all $f\in L_{\mathrm{loc}}^{p}(\Omega)$ such that
\[
\Vert f\Vert_{\dot{\mathcal{B}}_{X,q}^{\theta,p}}:=\left(  \int_{0}^{\infty
}\left(  \frac{E_{X}^{p}(f,r)}{r^{\theta}}\right)  ^{q}\frac{dr}{r}\right)
^{1/q}<\infty,
\]
with the usual modification when $q=\infty$, namely
\[
\Vert f\Vert_{\dot{\mathcal{B}}_{X,\infty}^{\theta,p}}:=\sup_{r>0}\frac
{E_{X}^{p}(f,r)}{r^{\theta}}.
\]
The corresponding non-homogeneous space is
\[
\mathcal{B}_{X,q}^{\theta,p}(\Omega):=\dot{\mathcal{B}}_{X,q}^{\theta
,p}(\Omega)\cap X,
\]
with norm
\[
\Vert f\Vert_{\mathcal{B}_{X,q}^{\theta,p}}:=\Vert f\Vert_{\dot{\mathcal{B}%
}_{X,q}^{\theta,p}}+\Vert f\Vert_{X}.
\]

\end{enumerate}
\end{definition}

\begin{remark}
The spaces $M^{1,p}(\Omega)$ were introduced by Haj\l asz \cite{Ha1} and have
become one of the basic objects in analysis of Sobolev spaces on metric
measure spaces; see also \cite{HK}. When $X=L^{p}$, the space $M^{\phi,L^{p}%
}(\Omega)$ coincides with the Haj\l asz--Sobolev space $M^{\phi,p}(\Omega)$
considered in \cite{Mas} and \cite{LYY}. In the particular case
\[
\phi(t)=t^{s},\qquad t\ge0,
\]
we recover the fractional Haj\l asz--Sobolev space $M^{s,p}(\Omega)$.

In the Euclidean setting one has, for $p>1$,
\[
M^{1,p}(\mathbb{R}^{n})=W^{1,p}(\mathbb{R}^{n}),
\]
whereas the endpoint space $M^{1,1}(\mathbb{R}^{n})$ coincides with the
Hardy--Sobolev space $H^{1,1}(\mathbb{R}^{n})$; see \cite[Theorem 1]{KS}.
Moreover, for $0<s<1$,
\[
M^{s,p}(\mathbb{R}^{n})=B^{s}_{p,\infty}(\mathbb{R}^{n}),
\]
up to equivalence of quasi-norms; see \cite{Yang}. Notice also that, in
$\mathbb{R}^{n}$, if $s>1$, then $M^{s,p}(\mathbb{R}^{n})$ is trivial, while
on fractal sets such spaces may be non-trivial; see \cite{Hu}.

Finally, the Haj\l asz--Besov scale defined through the averaged modulus
$E_{L^{p}}^{p}(f,r)$ agrees, in the Euclidean setting, with the usual Besov
scale; see Remark~\ref{rem1}. Thus
\[
\dot{\mathcal{B}}_{L^{p},q}^{\theta,p}(\mathbb{R}^{n}) = \dot B^{\theta}%
_{p,q}(\mathbb{R}^{n}),
\]
up to equivalence of quasi-norms.
\end{remark}

%------------

\subsection{Reduction to oscillation estimates}

\begin{proposition}
\label{prop:oscillation-reductions} Let $X$ be an r.i. space and let
$\phi,v\in\mathcal{A}_{0}$. Assume that
\[
h_{\mu}(r):=\inf_{x\in\Omega}\mu(B(x,r))\ge v(r),\qquad r>0.
\]

\begin{enumerate}
\item[(i)] Choose $0<\eta=\eta(X)\le1$ such that
\[
\overline{\alpha}_{X}<\frac1\eta.
\]
Then
\[
\left\Vert \frac{O(|f|^{\eta},\cdot)^{1/\eta}}{\omega_{H}(\cdot)} \right\Vert
_{X} \preceq\Vert f\Vert_{\dot M^{\phi,X}},
\]
where
\[
\omega_{H}(t):=\phi(v^{-1}(t)),\qquad t>0.
\]
Notice that such an $\eta$ can always be chosen with $0<\eta<1$; if
$\overline{\alpha}_{X}<1$, one may take $\eta=1$.

\item[(ii)] Let $0<p<\infty$, set
\[
p_{0}:=\min\{1,p\},
\]
let $0<q\le\infty$, and let $0<\theta<\infty$. Then
\[
\left(  \int_{0}^{\infty} \left(  \frac{\varphi_{X}(t)O(|f|^{p_{0}%
},t)^{1/p_{0}}} {v^{-1}(t)^{\theta}} \right)  ^{q} \frac{dt}{t} \right)
^{1/q} \preceq\Vert f\Vert_{\dot{\mathcal{B}}_{X,q}^{\theta,p}},
\]
with the usual modification when $q=\infty$.
\end{enumerate}
\end{proposition}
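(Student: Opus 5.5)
For part (i) I will apply Theorem~\ref{thm:symmetrization}(ii) with the auxiliary exponent $p:=\eta$. Since $\eta\le1$, we have $p_0=\min\{1,\eta\}=\eta$. Fix $g\in D^{\phi}(f)$. Then \eqref{eq:symmetrization-hajlasz-root} gives, pointwise in $t$,
\[
\frac{O(|f|^{\eta},t)^{1/\eta}}{\omega_H(t)}\preceq (g^{\eta})^{**}(t)^{1/\eta}=P^{(\eta)}(g^{*})(t).
\]
Taking the $X$-norm on $(0,\infty)$ and using the lattice property gives
\[
\left\Vert \frac{O(|f|^{\eta},\cdot)^{1/\eta}}{\omega_H(\cdot)}\right\Vert_X\preceq\Vert P^{(\eta)}g^{*}\Vert_{X}.
\]
By \eqref{Boydsup}, the hypothesis $\overline{\alpha}_X<1/\eta$ means $P^{(\eta)}$ is bounded on $X$, so the right side is $\preceq\Vert g^*\Vert_X=\Vert g\Vert_X$. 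Taking the infimum over $g\in D^\phi(f)$ finishes (i).

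For the remark on the choice of $\eta$: every r.i. space satisfies $\overline{\alpha}_X\le1$, so any $\eta<1$ works. If $\overline{\alpha}_X<1$, then $\eta=1$ is admissible.

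For part (ii) I start from Theorem~\ref{thm:symmetrization}(i) with $r=v^{-1}(2t)$:
\[
O(|f|^{p_0},t)^{1/p_0}\preceq \bigl([\nabla^p_r f]^p\bigr)^{**}(t)^{1/p}.
\]
The goal is to convert the right side into $E^p_X(f,r)/\varphi_X(t)$. Set $h=[\nabla_r^pf]^p$. I will use the H\"older inequality \eqref{Hold} with $\chi_E$, $\mu(E)=t$, in the $p$-convexified form:
\[
t\,h^{**}(t)=\sup_{\mu(E)=t}\int_E h\,d\mu\le \Vert h\Vert_{X^{(1/p)}}\,\varphi_{(X^{(1/p)})'}(t).
\]
This is clean only when $X^{(1/p)}$ is Banach. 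The cleaner route, valid for all $p$, is the elementary bound for a decreasing function $F=(\nabla^p_r f)^*$:
\[
F(s)\,\varphi_X(s)\le\Vert F\chi_{(0,s)}\Vert_X\le\Vert\nabla_r^pf\Vert_X .
\]
This gives $F(s)\le E^p_X(f,r)/\varphi_X(s)$. Then
\[
(h^{**}(t))^{1/p}=\Bigl(\frac1t\int_0^tF(s)^p\,ds\Bigr)^{1/p}\le E^p_X(f,r)\Bigl(\frac1t\int_0^t\varphi_X(s)^{-p}\,ds\Bigr)^{1/p}.
\]
The main obstacle is the last factor: $\int_0^t\varphi_X^{-p}$ is comparable to $t\varphi_X(t)^{-p}$ only if $\varphi_X$ has upper index $<1/p$, which is not assumed. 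If that comparison fails, I will first try to avoid the pointwise rearrangement bound entirely. Instead I estimate $\sup_{\mu(E)=t}\int_E h$ through the representation-space inequality $\Vert\chi_{(0,t)}F\Vert_X\ge \varphi_X(t)F(t)$, applied after the reduction in the proof of Theorem~\ref{thm:symmetrization}(i). That is, I go back to the pointwise inequality
\[
|f(x)|^{p_0}\le[\nabla^p_rf(x)]^{p_0}+(|f|^{p_0})^{**}(2t)
\]
(for $p>1$ via H\"older), and note that the oscillation can be controlled at the level of $f^*$ instead of $f^{**}$:
\[
(|f|^{p_0})^*(t/2)-(|f|^{p_0})^{**}(2t)\le ([\nabla_r^pf]^{p_0})^*(t/2)\le \bigl(E^p_X(f,r)/\varphi_X(t/2)\bigr)^{p_0}.
\]
Combined with $O(F,t)\le F^{**}(t/2)\cdot$const type drops, this should give
\[
\varphi_X(t)\,O(|f|^{p_0},t)^{1/p_0}\preceq E^p_X(f,v^{-1}(Ct)).
\]
Here $\varphi_X(t/2)\simeq\varphi_X(t)$ by quasi-concavity.

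Finally, with this pointwise-in-$t$ bound, I divide by $v^{-1}(t)^\theta\simeq v^{-1}(Ct)^\theta$, which holds since $v^{-1}\in\mathcal{A}_0$. I then take the $L^q(dt/t)$ norm and change variables $r=v^{-1}(Ct)$. Since $v\in\mathcal A_0$ with $rv'(r)\simeq v(r)$ (Remark~\ref{rem0}), we have $dt/t\simeq dr/r$, and the left side becomes $\preceq\Vert f\Vert_{\dot{\mathcal B}^{\theta,p}_{X,q}}$. The case $q=\infty$ is the same with suprema.
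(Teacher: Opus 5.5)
Your part (i) is correct and follows the paper's argument exactly: apply Theorem~\ref{thm:symmetrization}(ii) with exponent $\eta$ (so $p_0=\eta$), write $(g^{\eta})^{**}(t)^{1/\eta}=P^{(\eta)}(g^*)(t)$, use boundedness of $P^{(\eta)}$ from $\overline{\alpha}_X<1/\eta$, and take the infimum over $g\in D^{\phi}(f)$.

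Part (ii) has a genuine gap. For $p>1$ you never prove the key pointwise bound $\varphi_X(t)\,O(|f|^{p_0},t)^{1/p_0}\preceq E^p_X(f,v^{-1}(Ct))$, and your fallback cannot produce it. Write $F=|f|^{p_0}$ and $G=[\nabla^p_rf]^{p_0}$. The inequality $F^*(t/2)-F^{**}(2t)\le G^*(t/2)$ is true, but it says nothing about $O(F,t)=F^{**}(t)-F^*(t)$, which depends on $F^*$ over all of $(0,t)$. For example, if $F=M\chi_E$ with $\mu(E)=\varepsilon<t/2$, the left-hand side is negative, yet $O(F,t)=M\varepsilon/t$ can be made arbitrarily large. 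No ``drop'' inequality turns a single value of $G^*$ into control of $F^{**}(t)$. You cannot avoid an average of $G^*$ over $(0,t)$, and that average must be bounded by duality, not by the pointwise estimate $G^*(s)\le(\|\nabla^p_rf\|_X/\varphi_X(s))^{p_0}$ followed by integration.

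The missing idea is to keep the exponent $p_0$, not $p$, on the gradient side. Integrate your own pointwise inequality $|f(x)|^{p_0}\le[\nabla^p_rf(x)]^{p_0}+(|f|^{p_0})^{**}(2t)$ over sets of measure $t$; this gives $(|f|^{p_0})^{**}(t)-(|f|^{p_0})^{**}(2t)\le G^{**}(t)$. Since $1/p_0\ge1$, $Y=X^{(1/p_0)}$ is a Banach r.i. space. H\"older with $\chi_E$ together with $\varphi_Y(t)\varphi_{Y'}(t)=t$ then gives
\[
G^{**}(t)\le\frac{\|G\|_{Y}}{\varphi_Y(t)}=\frac{E^p_X(f,r)^{p_0}}{\varphi_X(t)^{p_0}},
\]
and \eqref{eq:osc-drop} yields $\varphi_X(t)\,O(|f|^{p_0},t)^{1/p_0}\preceq E^p_X(f,v^{-1}(2t))$.

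This is exactly what the paper does. It applies Theorem~\ref{thm:symmetrization}(i) with $p_0$ in place of $p$, uses H\"older in $X^{(1/p_0)}$, and finishes with Jensen's inequality $E^{p_0}_X(f,r)\le E^p_X(f,r)$.

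Your first H\"older attempt was already correct for $p\le1$, since then $X^{(1/p)}=X^{(1/p_0)}$. The difficulty for $p>1$ came only from working with $([\nabla^p_rf]^p)^{**}(t)^{1/p}$, which is genuinely not controlled by $\|\nabla^p_rf\|_X/\varphi_X(t)$ in general (take $X=L^1$ and a tall thin bump). Once the pointwise bound is in place, your division by $v^{-1}(t)^{\theta}$ and change of variables $r=v^{-1}(2t)$ are fine and match the paper's.
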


\begin{proof}
We prove the two assertions separately.

\medskip

\noindent\textit{Proof of (i).} Let $g\in D^{\phi}(f)$. By
Theorem~\ref{thm:symmetrization}, applied with the exponent $\eta$, we get
\[
O(|f|^{\eta},t)^{1/\eta}\preceq\omega_{H}(t)(g^{\eta})^{\ast\ast}(t)^{1/\eta
}.
\]
Therefore
\[
\left\Vert \frac{O(|f|^{\eta},\cdot)^{1/\eta}}{\omega_{H}(\cdot)}\right\Vert
_{X}\preceq\left\Vert \left(  (g^{\eta})^{\ast\ast}\right)  ^{1/\eta
}\right\Vert _{X}.
\]
Since
\[
\overline{\alpha}_{X}<\frac{1}{\eta},
\]
(\ref{Boydsup}) implies the Hardy operator $P^{(\eta)}$ is bounded on $X$.
Hence
\[
\left\Vert \left(  (g^{\eta})^{\ast\ast}\right)  ^{1/\eta}\right\Vert
_{X}\preceq\Vert g\Vert_{X}.
\]
Taking the infimum over $g\in D^{\phi}(f)$, we obtain
\[
\left\Vert \frac{O(|f|^{\eta},\cdot)^{1/\eta}}{\omega_{H}(\cdot)}\right\Vert
_{X}\preceq\Vert f\Vert_{\dot{M}^{\phi,X}}.
\]

\medskip

\noindent\textit{Proof of (ii).} Since $0<p_{0}\leq1$, we may apply
Theorem~\ref{thm:symmetrization} with exponent $p_{0}$. Thus
\[
O(|f|^{p_{0}},t)\preceq\left(  \left[  \nabla_{v^{-1}(2t)}^{p_{0}}f\right]
^{p_{0}}\right)  ^{\ast\ast}(t).
\]
By H\"{o}lder inequality (\ref{Hold}) and (\ref{si}) we obtain the estimate
\[
h^{\ast\ast}(t)=\frac{1}{t}\int_{0}^{t}h^{\ast}(s)ds\leq\Vert h\Vert_{Y}%
\frac{\varphi_{Y^{\prime}}(t)}{t}=\frac{\Vert h\Vert_{Y}}{\varphi_{Y}%
(t)},\qquad t>0,
\]
Applying it with
\[
h=\left[  \nabla_{v^{-1}(2t)}^{p_{0}}f\right]  ^{p_{0}}\qquad\text{and}\qquad
Y=X^{(1/p_{0})},
\]
we get
\[
\left(  \left[  \nabla_{v^{-1}(2t)}^{p_{0}}f\right]  ^{p_{0}}\right)
^{\ast\ast}(t)\leq\frac{\left\Vert \left[  \nabla_{v^{-1}(2t)}^{p_{0}%
}f\right]  ^{p_{0}}\right\Vert _{X^{(1/p_{0})}}}{\varphi_{X^{(1/p_{0})}}(t)}.
\]
Since
\[
\left\Vert \left[  \nabla_{v^{-1}(2t)}^{p_{0}}f\right]  ^{p_{0}}\right\Vert
_{X^{(1/p_{0})}}=E_{X}^{p_{0}}(f,v^{-1}(2t))^{p_{0}}%
\]
and
\[
\varphi_{X^{(1/p_{0})}}(t)=\varphi_{X}(t)^{p_{0}},
\]
we obtain
\[
\left(  \left[  \nabla_{v^{-1}(2t)}^{p_{0}}f\right]  ^{p_{0}}\right)
^{\ast\ast}(t)\leq\frac{E_{X}^{p_{0}}(f,v^{-1}(2t))^{p_{0}}}{\varphi
_{X}(t)^{p_{0}}}.
\]
Consequently,
\begin{equation}
\varphi_{X}(t)O(|f|^{p_{0}},t)^{1/p_{0}}\preceq E_{X}^{p_{0}}(f,v^{-1}(2t)).
\label{eq:besov-pointwise-oscillation}%
\end{equation}

Since $p_{0}\leq p$, Jensen's inequality gives
\[
E_{X}^{p_{0}}(f,r)\leq E_{X}^{p}(f,r),\qquad r>0.
\]
Therefore, from (\ref{eq:besov-pointwise-oscillation}),%
\[
\varphi_{X}(t)O(|f|^{p_{0}},t)^{1/p_{0}}\preceq E_{X}^{p}(f,v^{-1}(2t)).
\]

Dividing by $v^{-1}(2t)^{\theta}$, taking the $L^{q}((0,\infty),dt/t)$%
-quasi-norm, and using the preceding estimate, we obtain
\[
\left(  \int_{0}^{\infty}\left(  \frac{\varphi_{X}(t)O(|f|^{p_{0}}%
,t)^{1/p_{0}}} {v^{-1}(2t)^{\theta}} \right)  ^{q} \frac{dt}{t} \right)
^{1/q} \preceq\left(  \int_{0}^{\infty}\left(  \frac{E_{X}^{p}(f,v^{-1}(2t))}
{v^{-1}(2t)^{\theta}} \right)  ^{q} \frac{dt}{t} \right)  ^{1/q}.
\]
Using the representative of $v$ fixed in Remark~\ref{rem0}, we have
\[
uv^{\prime}(u)\simeq v(u),\qquad u>0.
\]
Thus the change of variables $2t=v(u)$ gives
\[
\left(  \int_{0}^{\infty}\left(  \frac{E_{X}^{p}(f,v^{-1}(2t))} {v^{-1}%
(2t)^{\theta}} \right)  ^{q} \frac{dt}{t} \right)  ^{1/q} \simeq\left(
\int_{0}^{\infty}\left(  \frac{E_{X}^{p}(f,u)}{u^{\theta}} \right)  ^{q}
\frac{du}{u} \right)  ^{1/q}.
\]
Since $v^{-1}(2t)\simeq v^{-1}(t)$, the left-hand side is equivalent to
\[
\left(  \int_{0}^{\infty}\left(  \frac{\varphi_{X}(t)O(|f|^{p_{0}}%
,t)^{1/p_{0}}} {v^{-1}(t)^{\theta}} \right)  ^{q} \frac{dt}{t} \right)
^{1/q}.
\]
Therefore
\[
\left(  \int_{0}^{\infty}\left(  \frac{\varphi_{X}(t)O(|f|^{p_{0}}%
,t)^{1/p_{0}}} {v^{-1}(t)^{\theta}} \right)  ^{q} \frac{dt}{t} \right)  ^{1/q}
\preceq\Vert f\Vert_{\dot{\mathcal{B}}_{X,q}^{\theta,p}}.
\]
This proves the second assertion. The case $q=\infty$ is obtained in the same
way, replacing integrals by suprema.
\end{proof}

\subsection{Normed oscillation inequalities and lower growth}

For simplicity and clarity of presentation, the normed oscillation
equivalences in this section are formulated in the power case
\[
h_{\mu}(r)\succeq r^{Q}.
\]
The argument is not specific to power growth. The same proof applies to a
general admissible lower growth function $v$, after replacing the power
weights by the corresponding weights determined by $v$. We restrict ourselves
to the power setting in order to avoid additional notation and to make the
resulting function spaces explicit. We now show that the normed oscillation
inequalities obtained above retain the lower geometric information encoded by
the measure. To make this fact explicit and to connect the abstract results
with the model scale considered in the next section, we specialize, up to
multiplicative constants, to
\begin{equation}
v(r)\simeq r^{Q}, \qquad\phi(r)=r^{\alpha}, \qquad0<\alpha\le1.
\label{eq:power-model-functions}%
\end{equation}
Consequently,
\begin{equation}
\phi(v^{-1}(t)) \simeq t^{\alpha/Q}. \label{eq:power-model-weight}%
\end{equation}

\begin{theorem}
\label{thm:normed-oscillation-lower-growth} Let $Q>0$,
$0<\alpha\le1$, and let $X$ be an r.i. space satisfying
$\overline{\alpha}_{X}<1. $ Then the following assertions are
equivalent.

\begin{enumerate}
\item For every $f\in M^{\alpha,X}$,
\[
\left\|  t^{-\alpha/Q}O(f,t) \right\|  _{X} \preceq\|f\|_{\dot M^{\alpha,X}}.
\]

\item The measure satisfies
\[
h_{\mu}(r)\succeq r^{Q}, \qquad r>0.
\]

\end{enumerate}
\end{theorem}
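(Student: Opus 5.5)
The two directions are handled separately: (ii)$\Rightarrow$(i) comes from the reduction already proved, and (i)$\Rightarrow$(ii) from testing on a cutoff function, as in Theorem~\ref{teo1-converse}(ii). Throughout, $\phi(t)=t^{\alpha}$ with $0<\alpha\le1$ is increasing, concave and subadditive, and $v(r)=cr^{Q}$ with $v^{-1}(t)\simeq t^{1/Q}$.

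\emph{(ii)$\Rightarrow$(i).} The hypothesis $h_{\mu}(r)\succeq r^{Q}$ means $h_{\mu}\ge v$ with $v(r)=cr^{Q}\in\mathcal{A}_{0}$. Since $\overline{\alpha}_{X}<1$, we may take $\eta=1$ in Proposition~\ref{prop:oscillation-reductions}(i). By \eqref{eq:power-model-weight}, $\omega_{H}(t)\simeq t^{\alpha/Q}$, and the proposition gives
\[
\bigl\|t^{-\alpha/Q}O(f,t)\bigr\|_{X}\preceq\|f\|_{\dot M^{\alpha,X}}.
\]
Unwinding the proof, this is Theorem~\ref{thm:symmetrization}(ii) with $p=1$, followed by boundedness of $P^{(1)}$ on $X$ via \eqref{Boydsup}. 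This direction is routine.

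\emph{(i)$\Rightarrow$(ii).} Fix $x_{0}\in\Omega$, $a>0$, set $B=B(x_{0},a)$, $m=\mu(B)$, and take
\[
u(y)=\bigl((a-d(x_{0},y))_{+}\bigr)^{\alpha},\qquad \rho=\chi_{B}.
\]
As shown in the proof of Theorem~\ref{teo1-converse}(ii), $\rho\in D^{\phi}(u)$. Moreover $u\in X$, since $u$ is bounded and supported in a set of finite measure, so $u\in M^{\alpha,X}$ and $\|u\|_{\dot M^{\alpha,X}}\le\varphi_{X}(m)$. Hence (i) gives
\[
\bigl\|t^{-\alpha/Q}O(u,t)\bigr\|_{X}\preceq\varphi_{X}(m).
\]
The task is to bound the left side from below by roughly $a^{\alpha}m^{-\alpha/Q}\varphi_{X}(m)$. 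Restricting the norm to $t\in(0,2m)$ and using $t^{-\alpha/Q}\ge(2m)^{-\alpha/Q}$ there, it suffices to prove
\[
\|O(u,\cdot)\chi_{(0,2m)}\|_{X}\succeq a^{\alpha}\varphi_{X}(m).
\]

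\emph{Main step: the lower bound on $\|O(u,\cdot)\chi_{(0,2m)}\|_{X}$.} The only available information is \eqref{eq:common-osc-integral}, namely $a^{\alpha}/2\le\int_{0}^{2m}O(u,s)\,ds/s$. To turn this integral into a norm bound, I would pair it with a function in the associate space and use \eqref{Hold}:
\[
\frac{a^{\alpha}}{2}\le\bigl\|O(u,\cdot)\chi_{(0,2m)}\bigr\|_{X}\,\bigl\|s^{-1}\chi_{(0,2m)}\bigr\|_{X'}.
\]
The difficulty is that $s^{-1}$ is not locally integrable, so the norm $\|s^{-1}\chi_{(0,2m)}\|_{X'}$ may be infinite. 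The remedy is to split at a small multiple of $m$ and handle the near-zero part with the monotonicity of $tO(u,t)$:
\begin{itemize}
\item On $(0,\varepsilon m)$, monotonicity gives $O(u,s)\le \varepsilon m\,O(u,\varepsilon m)/s$, so this part of the integral is controlled by its endpoint value.
\item Alternatively, $\int_{0}^{\varepsilon m}O(u,s)\,ds/s=u^{**}(0^{+})-u^{**}(\varepsilon m)$, and $u^{*}$ is controlled directly. On the ball $B(x_{0},a/2)$ one has $u\ge(a/2)^{\alpha}$, but the measure of that ball relative to $m$ is unknown, which is why I prefer not to rely on it.
\end{itemize}
I would first aim for a weaker inequality of the form $a^{\alpha}m^{-\alpha/Q}\preceq 1$ up to Boyd-index factors. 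From $a^{\alpha}m^{-\alpha/Q}\preceq1$ one obtains $m\succeq a^{Q}$, which is (ii). Getting constants independent of $m$ and $a$ is the step I expect to be the main obstacle; the scaling $\varphi_{X'}(t)=t/\varphi_{X}(t)$ from \eqref{si} should make the $\varphi_{X}(m)$ factors cancel.
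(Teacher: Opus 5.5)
Your (ii)$\Rightarrow$(i) direction is correct and is the paper's argument (Proposition~\ref{prop:oscillation-reductions}(i) with $\eta=1$). The converse has a genuine gap. The inequality you reduce to, $\|O(u,\cdot)\chi_{(0,2m)}\|_X\succeq a^{\alpha}\varphi_X(m)$ for the single cutoff $u=((a-d(x_0,\cdot))_+)^{\alpha}$, does not use (i), so it would have to hold on every metric measure space. It is false.

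Take $\Omega=\mathbb{R}$, $X=L^p$ with $1<p<\infty$, and a weight that gives mass $\tau$ to $(-a+\sigma,a-\sigma)$ and puts the remaining mass $m-\tau$ of $B(0,a)$ in the strips $a-\sigma\le|y|<a$. Then $u\le\sigma^{\alpha}$ off a set of measure $\tau$. Hence $O(u,t)\le u^{**}(t)\le a^{\alpha}\min\{1,\tau/t\}+\sigma^{\alpha}$, and
$\|O(u,\cdot)\chi_{(0,2m)}\|_{L^p}\preceq a^{\alpha}\tau^{1/p}+\sigma^{\alpha}m^{1/p}$.
This tends to $0$ as $\tau,\sigma\to0$ with $a,m$ fixed, while $a^{\alpha}m^{1/p}$ does not.

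In fact, for $1<p<Q/\alpha$, the same bound with the weight $t^{-\alpha/Q}$ shows more. The full inequality $\|t^{-\alpha/Q}O(u,t)\|_X\preceq\varphi_X(m)$ that you extract from (i) holds for small $\tau,\sigma$, however small $m$ is compared with $a^Q$. So one cutoff per ball carries no usable lower-growth information.

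Your devices for the H\"older step fail for the same reason. Monotonicity of $sO(u,s)$ only gives $O(u,s)\le\varepsilon m\,O(u,\varepsilon m)/s$, whose integral against $ds/s$ on $(0,\varepsilon m)$ diverges. Moreover, $\int_0^{\varepsilon m}O(u,s)\,ds/s=a^{\alpha}-u^{**}(\varepsilon m)$ is nearly all of $a^{\alpha}$ in this example. The obstruction you noticed yourself is real: the measure of the set where $u$ is large is unrelated to $m$. It cannot be removed by duality.

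The missing idea is to accept a two-scale inequality and iterate it. Evaluate the oscillation just beyond the support, where $u^*=0$ and so $O(u,t)=u^{**}(t)$.

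The paper sets $B_j=B(x,r/2+r2^{-j-1})$, $a_j=\mu(B_j)$ and $\delta_j=r2^{-j-2}$. It uses nested Lipschitz cutoffs $u_j$ with $u_j=1$ on $B_{j+1}$ and $\operatorname{supp}u_j\subset B_j$, with Haj\l asz gradient $\delta_j^{-\alpha}\chi_{B_j}$. On $(a_j,2a_j)$ one has $O(u_j,t)\ge a_{j+1}/t$, and $\|\chi_{(a_j,2a_j)}\|_X=\varphi_X(a_j)$. So (i), after cancelling $\varphi_X(a_j)$, gives the recursion $a_{j+1}\preceq\delta_j^{-\alpha}a_j^{1+\alpha/Q}$.

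Raise this to the power $\gamma=Q/(Q+\alpha)$ and iterate. This yields $a_1\ge c_0^{S_N}r^{\alpha\gamma S_N}2^{-\alpha\gamma T_N}a_{N+1}^{\gamma^N}$, where $S_N$ and $T_N$ converge. Because the radii decrease to $r/2>0$, we have $\mu(B(x,r/2))\le a_{N+1}\le a_1$. Hence $a_{N+1}^{\gamma^N}\to1$ and $a_1\succeq r^{\alpha\gamma/(1-\gamma)}=r^Q$.

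This choice of radii matters. Iterating over radii shrinking to $0$ (for instance, your own $u$ on $(m,2m)$ with $a$ halved at each step) would need a priori control of how fast $\mu(B(x,\rho))\to0$.
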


\begin{proof}
Assume first that assertion $(i)$ holds. Fix $x\in\Omega$ and $r>0$, and
introduce the following dyadic cut-off construction.\footnote{The construction
below and the subsequent iteration are inspired by the dyadic cut-off argument
in \cite[Construction~14 and Lemma~16]{AGH}; here they are adapted to the
normed oscillation inequality.} For $j=0,1,2,\ldots$, set
\begin{equation}
\begin{gathered} r_j=\frac r2+\frac{r}{2^{j+1}}, \qquad B_j=B(x,r_j), \qquad a_j=\mu(B_j), \\ \delta_j=r_j-r_{j+1}=\frac{r}{2^{j+2}}, \qquad u_j(y)=\left( 1-\frac{\operatorname{dist}(y,B_{j+1})}{\delta_j} \right)_+. \end{gathered} \label{eq:dyadic-cutoffs}%
\end{equation}
Then
\begin{equation}
u_{j}=1 \quad\text{on }B_{j+1}, \qquad\operatorname{supp}u_{j}\subset B_{j},
\qquad0\le u_{j}\le1, \label{eq:dyadic-cutoff-properties}%
\end{equation}
and $u_{j}$ is $\delta_{j}^{-1}$-Lipschitz.

Set
\[
g_{j}:=\delta_{j}^{-\alpha}\chi_{B_{j}}.
\]
We claim that $g_{j}\in D^{\alpha}(u_{j})$. Indeed, the distance
function to a set is $1$-Lipschitz, and hence $u_{j}$ is
$\delta_{j}^{-1}$-Lipschitz. Together with $0\le u_{j}\le1$, this
gives
\[
|u_{j}(z)-u_{j}(y)| \le\min\left\{  1,\frac{d(z,y)}{\delta_{j}} \right\}  .
\]
Since $0<\alpha\le1$ and $\min{(1,t)}\le t^{\alpha}$ for every
$t\ge0$, we conclude that
\[
|u_{j}(z)-u_{j}(y)| \le\left(  \frac{d(z,y)}{\delta_{j}} \right)  ^{\alpha}.
\]
If $u_{j}(z)\ne u_{j}(y)$, then at least one of the two values is
nonzero. Suppose, for instance, that $u_{j}(z)>0$. Then
\[
\operatorname{dist}(z,B_{j+1})<\delta_{j}.
\]
Since $r_{j+1}+\delta_{j}=r_{j}$, this implies $z\in B_{j}$. Thus
\[
\chi_{B_{j}}(z)+\chi_{B_{j}}(y)\ge1,
\]
and consequently
\[
\begin{aligned}
|u_j(z)-u_j(y)| &\le \left( \frac{d(z,y)}{\delta_j} \right)^\alpha
\\
&\le d(z,y)^\alpha \bigl(g_j(z)+g_j(y)\bigr).
\end{aligned}
\]
If $u_{j}(z)=u_{j}(y)$, the inequality is trivial. Hence $g_{j}\in
D^{\alpha}(u_{j})$.

We next estimate the oscillation of $u_{j}$. By
\eqref{eq:dyadic-cutoff-properties},
\[
u_{j}^{*}(t)=0, \qquad t>a_{j},
\]
whereas
\[
u_{j}^{*}(s)=1, \qquad0<s<a_{j+1}.
\]
Consequently, for $a_{j}<t<2a_{j}$,
\begin{equation}
O(u_{j},t) = u_{j}^{**}(t) \ge\frac{a_{j+1}}{t}.
\label{eq:dyadic-cutoff-oscillation}%
\end{equation}
Thus
\[
t^{-\alpha/Q}O(u_{j},t) \ge2^{-1-\alpha/Q} a_{j+1}a_{j}^{-1-\alpha/Q}
\chi_{(a_{j},2a_{j})}(t),
\]
and then
\[
\begin{aligned}
\left\| t^{-\alpha/Q}O(u_j,t) \right\|_X &\succeq
a_{j+1}a_j^{-1-\alpha/Q} \|\chi_{(a_j,2a_j)}\|_X
\\
&= a_{j+1}a_j^{-1-\alpha/Q} \varphi_X(a_j).
\end{aligned}
\]
Since
\[
\|u_{j}\|_{\dot M^{\alpha,X}} \le\|g_{j}\|_{X} = \delta_{j}^{-\alpha}%
\varphi_{X}(a_{j}),
\]
assertion $(i)$ gives
\[
a_{j+1}a_{j}^{-1-\alpha/Q}\varphi_{X}(a_{j}) \preceq\delta_{j}^{-\alpha
}\varphi_{X}(a_{j}).
\]
Cancelling the fundamental function, we obtain
\begin{equation}
a_{j+1} \preceq\delta_{j}^{-\alpha}a_{j}^{1+\alpha/Q}.
\label{eq:normed-oscillation-recursive}%
\end{equation}

Set
\[
\gamma:=\frac{Q}{Q+\alpha}.
\]
Then
\[
0<\gamma<1, \qquad\gamma\left(  1+\frac{\alpha}{Q}\right)  =1.
\]
Writing \eqref{eq:normed-oscillation-recursive} as
\[
a_{j+1} \le C\delta_{j}^{-\alpha}a_{j}^{1+\alpha/Q},
\]
and raising both sides to the power $\gamma$, we obtain
\[
a_{j+1}^{\gamma}\le C^{\gamma}\delta_{j}^{-\alpha\gamma}a_{j}.
\]
Therefore,
\[
\begin{aligned}
a_j &\ge C^{-\gamma}\delta_j^{\alpha\gamma}a_{j+1}^\gamma
\\
&= C^{-\gamma} r^{\alpha\gamma} 2^{-(j+2)\alpha\gamma}
a_{j+1}^\gamma.
\end{aligned}
\]
Absorbing the factor $C^{-\gamma}2^{-2\alpha\gamma}$ into a constant $c_{0}%
>0$, we get
\[
a_{j} \ge c_{0} r^{\alpha\gamma} 2^{-j\alpha\gamma} a_{j+1}^{\gamma},
\]
where $c_{0}$ is independent of $j$, $x$, and $r$.

We spell out the first steps of the iteration. For $j=1$,
\[
a_{1} \ge c_{0} r^{\alpha\gamma} 2^{-\alpha\gamma} a_{2}^{\gamma}.
\]
Applying the recursive estimate to $a_{2}$, we obtain
\[
\begin{aligned}
a_1 &\ge c_0 r^{\alpha\gamma} 2^{-\alpha\gamma} \left( c_0
r^{\alpha\gamma} 2^{-2\alpha\gamma} a_3^\gamma \right)^\gamma
\\
&= c_0^{1+\gamma} r^{\alpha\gamma(1+\gamma)}
2^{-\alpha\gamma(1+2\gamma)} a_3^{\gamma^2}.
\end{aligned}
\]
Applying it once more gives
\[
\begin{aligned}
a_1 &\ge c_0^{1+\gamma} r^{\alpha\gamma(1+\gamma)}
2^{-\alpha\gamma(1+2\gamma)} \left( c_0 r^{\alpha\gamma}
2^{-3\alpha\gamma} a_4^\gamma \right)^{\gamma^2}
\\
&= c_0^{1+\gamma+\gamma^2} r^{\alpha\gamma(1+\gamma+\gamma^2)}
2^{-\alpha\gamma(1+2\gamma+3\gamma^2)} a_4^{\gamma^3}.
\end{aligned}
\]
Continuing in this way, after $N$ steps,
\[
a_{1} \ge c_{0}^{S_{N}} r^{\alpha\gamma S_{N}} 2^{-\alpha\gamma T_{N}}
a_{N+1}^{\gamma^{N}},
\]
where
\[
S_{N} := \sum_{k=0}^{N-1}\gamma^{k}, \qquad T_{N} := \sum_{k=1}^{N}%
k\gamma^{k-1}.
\]

Since
\[
B(x,r/2) \subset B_{N+1} \subset B_{1},
\]
and balls have positive and finite measure,
\[
a_{N+1}^{\gamma^{N}} \longrightarrow1.
\]
Moreover,
\[
S_{N} \longrightarrow\frac1{1-\gamma}, \qquad T_{N} \longrightarrow
\frac1{(1-\gamma)^{2}}.
\]
Letting $N\to\infty$, we obtain
\[
a_{1} \succeq r^{\alpha\gamma/(1-\gamma)}.
\]
Since
\[
\frac{\alpha\gamma}{1-\gamma} = Q,
\]
it follows that
\[
a_{1} \succeq r^{Q}.
\]
Finally,
\[
a_{1} = \mu\left(  B\left(  x,\frac{3r}{4}\right)  \right)  \le\mu(B(x,r)).
\]
Taking the infimum over $x\in\Omega$, we conclude that
\[
h_{\mu}(r) \succeq r^{Q}, \qquad r>0.
\]
This proves assertion $(ii)$.

Conversely, assertion $(ii)$, together with
Proposition~\ref{prop:oscillation-reductions}(i) applied with $\eta=1$, gives
assertion $(i)$.
\end{proof}

\begin{remark}
The restriction
\[
\overline{\alpha}_{X}<1
\]
is used only to work with oscillation exponent $1$. The endpoint case
$\overline{\alpha}_{X}=1$ can be treated by choosing any $0<\eta<1$ and
replacing $O(f,t)$ by
\[
O(|f|^{\eta},t)^{1/\eta}.
\]
Indeed,
\[
\overline{\alpha}_{X^{(1/\eta)}} = \eta\,\overline{\alpha}_{X} = \eta<1,
\]
so the Hardy averaging operator is bounded on the convexified space
$X^{(1/\eta)}$. Consequently, the lower ball estimate
\[
h_{\mu}(r)\succeq r^{Q}
\]
implies
\[
\left\|  t^{-\alpha/Q} O(|f|^{\eta},t)^{1/\eta} \right\|  _{X} \preceq
\|f\|_{\dot M^{\alpha,X}}.
\]

Conversely, using the cutoffs defined in \eqref{eq:dyadic-cutoffs}, the same
argument gives
\[
a_{j+1} \preceq\delta_{j}^{-\alpha\eta} a_{j}^{1+\alpha\eta/Q}.
\]
Taking
\[
\gamma= \frac{Q}{Q+\alpha\eta},
\]
the preceding iteration again yields
\[
a_{1} \succeq r^{Q}.
\]
Thus the equivalence remains valid when $\overline{\alpha}_{X}=1$, provided
the powered oscillation functional is used.
\end{remark}

The corresponding result for the averaged Besov scale is as follows.

\begin{theorem}
\label{thm:normed-besov-lower-growth} Let $Q>0$, $0<\theta<1$, $0<p<\infty$,
and $0<q\le\infty$, and set
\[
p_{0}:=\min\{1,p\}.
\]
Let $X$ be an r.i. space such that $X^{(1/p)}$ is an r.i. space.
Assume moreover that
\[
D_{X,p} := \sup_{s>0} \|A_{s}\|_{X^{(1/p)}\to X^{(1/p)}} < \infty.
\]
Then the following assertions are equivalent.

\begin{enumerate}
\item For every measurable function $f$,
\[
\left(  \int_{0}^{\infty}\left[  \varphi_{X}(t)t^{-\theta/Q} O(|f|^{p_{0}%
},t)^{1/p_{0}} \right]  ^{q} \frac{dt}{t} \right)  ^{1/q} \preceq
\|f\|_{\dot{\mathcal{B}}_{X,q}^{\theta,p}},
\]
with the usual modification when $q=\infty$.

\item The measure satisfies
\[
h_{\mu}(r)\succeq r^{Q}, \qquad r>0.
\]

\end{enumerate}
\end{theorem}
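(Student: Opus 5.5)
The direction (ii)$\Rightarrow$(i) is immediate. With $v(r)\simeq r^{Q}$ we have $v^{-1}(t)^{\theta}\simeq t^{\theta/Q}$, so Proposition~\ref{prop:oscillation-reductions}(ii) gives exactly assertion (i). The averaging hypothesis $D_{X,p}<\infty$ is not needed here.

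The substance is (i)$\Rightarrow$(ii). I would reuse the dyadic cut-off construction \eqref{eq:dyadic-cutoffs} from the proof of Theorem~\ref{thm:normed-oscillation-lower-growth}: fix $x$, $r$, and take $r_j, B_j, a_j, \delta_j, u_j$ as there.

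\emph{Lower bound for the left-hand side.} Since $0\le u_j\le 1$ with $u_j=1$ on $B_{j+1}$ and $\operatorname{supp}u_j\subset B_j$, the same computation as \eqref{eq:dyadic-cutoff-oscillation}, applied to $|u_j|^{p_0}$, gives $O(|u_j|^{p_0},t)\ge a_{j+1}/t$ for $a_j<t<2a_j$. Restricting the $dt/t$ integral to $(a_j,2a_j)$ and using quasi-concavity of $\varphi_X$ (so $\varphi_X(t)\simeq\varphi_X(a_j)$ there), the left-hand side of (i) is bounded below by
\[
c\,\varphi_X(a_j)\,a_j^{-\theta/Q}\,(a_{j+1}/a_j)^{1/p_0}.
\]
The case $q=\infty$ is the same with a supremum.

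\emph{Upper bound for the Besov seminorm.} Here I split the integral in $s$ at $s=\delta_j$.

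For $s\ge\delta_j$: use $|u_j(z)-u_j(y)|^{p}\le \chi_{B_j}(z)+\chi_{B_j}(y)$, which gives $[\nabla^p_s u_j]^p\le \chi_{B_j}+A_s\chi_{B_j}$. Measuring in $X^{(1/p)}$ and using $D_{X,p}<\infty$ yields $\|[\nabla_s^pu_j]^p\|_{X^{(1/p)}}\preceq \varphi_X(a_j)^p$, hence $E^p_X(u_j,s)\preceq\varphi_X(a_j)$. This is the main obstacle, and exactly where $D_{X,p}$ enters: without it, $A_s\chi_{B_j}$ could spread mass over large sets. One also needs $\|\chi_{B_j}\|_{X^{(1/p)}}=\varphi_X(a_j)^p$, which follows from the definition of the convexification.

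For $s<\delta_j$: use the Lipschitz bound $|u_j(z)-u_j(y)|\le (d(z,y)/\delta_j)(\chi_{B_j}(z)+\chi_{B_j}(y))$, which gives $E_X^p(u_j,s)\preceq (s/\delta_j)\varphi_X(a_j)$, again via $D_{X,p}$.

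Integrating $s^{-\theta q}$ against these two bounds, and using $0<\theta<1$ for convergence at $0$ and $\theta>0$ at $\infty$, gives $\|u_j\|_{\dot{\mathcal B}^{\theta,p}_{X,q}}\preceq \delta_j^{-\theta}\varphi_X(a_j)$.

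\emph{Iteration.} Comparing the two bounds and cancelling $\varphi_X(a_j)$ gives
\[
a_{j+1}\preceq \delta_j^{-\theta p_0}\,a_j^{1+\theta p_0/Q}.
\]
This is the recursion \eqref{eq:normed-oscillation-recursive} with $\alpha$ replaced by $\theta p_0$. I would then run the identical iteration with $\gamma=Q/(Q+\theta p_0)$. Since $\theta p_0\gamma/(1-\gamma)=Q$, it yields $\mu(B(x,3r/4))=a_1\succeq r^Q$, and so $h_\mu(r)\succeq r^Q$. All constants are independent of $j,x,r$, as the iteration requires.
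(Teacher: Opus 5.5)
Your proposal is correct and follows the paper's proof essentially step for step. The shared ingredients are the dyadic cut-offs $u_j$, the lower bound $O(|u_j|^{p_0},t)\ge a_{j+1}/t$ on $(a_j,2a_j)$, the estimate $E^p_X(u_j,s)\preceq \min\{1,s/\delta_j\}\varphi_X(a_j)$ obtained through $D_{X,p}$ (your split at $s=\delta_j$ is exactly this minimum), the recursion with $\alpha$ replaced by $\theta p_0$ and $\gamma=Q/(Q+\theta p_0)$, and Proposition~\ref{prop:oscillation-reductions}(ii) for the converse direction, which indeed does not use $D_{X,p}$.
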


\begin{proof}
Assume first that assertion $(i)$ holds. Fix $x\in\Omega$ and $r>0$, and
consider the quantities $r_{j},\, B_{j},\, a_{j},\, \delta_{j},\, u_{j} $
defined in \eqref{eq:dyadic-cutoffs}.

By \eqref{eq:dyadic-cutoff-properties}, for $a_{j}<t<2a_{j}$,
\[
O(|u_{j}|^{p_{0}},t) \ge\frac{a_{j+1}}{t}.
\]
Since $\varphi_{X}$ is increasing, it follows that
\[
\begin{aligned}
& \left( \int_0^\infty \left[ \varphi_X(t)t^{-\theta/Q}
O(|u_j|^{p_0},t)^{1/p_0} \right]^q \frac{dt}{t} \right)^{1/q}
\\
&\qquad\succeq \varphi_X(a_j) a_{j+1}^{1/p_0} a_j^{-1/p_0-\theta/Q},
\end{aligned}
\]
with the same conclusion when $q=\infty$.

We now estimate the averaged Besov seminorm. The support and Lipschitz
properties in \eqref{eq:dyadic-cutoff-properties} imply
\[
[\nabla_{s}^{p} u_{j}(z)]^{p} \preceq\min\left\{  1,\frac{s^{p}}{\delta
_{j}^{p}} \right\}  \left[  \chi_{B_{j}}(z)+A_{s}\chi_{B_{j}}(z) \right]  .
\]
Therefore, by the definition of $D_{X,p}$,
\[
\begin{aligned}
E_X^p(u_j,s)^p &\preceq \min\left\{ 1,\frac{s^p}{\delta_j^p}
\right\} \|\chi_{B_j}\|_{X^{(1/p)}}
\\
&= \min\left\{ 1,\frac{s^p}{\delta_j^p} \right\} \varphi_X(a_j)^p.
\end{aligned}
\]
Hence
\[
E_{X}^{p}(u_{j},s) \preceq\min\left\{  1,\frac{s}{\delta_{j}} \right\}
\varphi_{X}(a_{j}).
\]
Since $0<\theta<1$,
\[
\|u_{j}\|_{\dot{\mathcal{B}}_{X,q}^{\theta,p}} \preceq\delta_{j}^{-\theta
}\varphi_{X}(a_{j}),
\]
with the usual modification when $q=\infty$.

Applying assertion $(i)$ to $u_{j}$ and cancelling $\varphi_{X}(a_{j})$, we
obtain
\[
a_{j+1}^{1/p_{0}} \preceq\delta_{j}^{-\theta} a_{j}^{1/p_{0}+\theta/Q}.
\]
Equivalently,
\begin{equation}
a_{j+1} \preceq\delta_{j}^{-\theta p_{0}} a_{j}^{1+\theta p_{0}/Q}.
\label{eq:normed-besov-recursive}%
\end{equation}

Set
\[
\gamma:= \frac{Q}{Q+\theta p_{0}}.
\]
Since $\delta_{j}=r/2^{j+2}$, \eqref{eq:normed-besov-recursive} gives
\[
a_{j} \ge c_{0} r^{\theta p_{0}\gamma} 2^{-j\theta p_{0}\gamma} a_{j+1}%
^{\gamma},
\]
where $c_{0}>0$ is independent of $j$, $x$, and $r$.

The iteration carried out in the proof of
Theorem~\ref{thm:normed-oscillation-lower-growth}, with $\alpha$ replaced by
$\theta p_{0}$, yields
\[
a_{1} \succeq r^{\theta p_{0}\gamma/(1-\gamma)} = r^{Q}.
\]
Since
\[
a_{1} = \mu\left(  B\left(  x,\frac{3r}{4}\right)  \right)  \le\mu(B(x,r)),
\]
taking the infimum over $x\in\Omega$ proves assertion $(ii)$.

Conversely, assertion $(ii)$, together with
Proposition~\ref{prop:oscillation-reductions}(ii), gives assertion $(i)$.
\end{proof}

\begin{remark}
The assumption that $X^{(1/p)}$ be an r.i. space is automatic when $0<p\le1$,
since $1/p\ge1$. For $p>1$, it is an additional convexity requirement; it is
satisfied, for instance, when $X$ is $p$-convex, after passing to an
equivalent lattice norm if necessary; see \cite{LT2}.

Moreover, suppose that
\[
D:=\sup_{s>0}\|A_{s}\|_{L^{1}\to L^{1}}<\infty.
\]
Since
\[
\|A_{s}\|_{L^{\infty}\to L^{\infty}}\le1,
\]
the standard interpolation theorem for rearrangement-invariant spaces (see
\cite{BS,KPS}) yields
\[
\sup_{s>0} \|A_{s}\|_{X^{(1/p)}\to X^{(1/p)}}<\infty
\]
whenever $X^{(1/p)}$ is a Banach function space. Thus, in this setting, the
uniform $L^{1}$-boundedness of the averaging operators implies the operator
assumption used in the theorem.
\end{remark}

\section{Oscillation targets and classical Sobolev embeddings}

\label{sec:embedding-consequences}

The results of the preceding section show that, in the power case, the lower
ball estimate is equivalent to an embedding into a space defined by a normed
oscillation functional. This is the level at which the geometric information
is retained.

This point of view connects our results with the literature on converse
Sobolev embeddings. Converse results starting from prescribed Sobolev,
Haj\l asz, Besov, or fractional Sobolev targets have been obtained in
\cite{GorkaPA,AGH,Karak,GorkaNA,GSStudia}. In the subcritical range, the
oscillation spaces can be identified with classical Lorentz spaces. This
recovers the usual Sobolev embeddings and, at the same time, yields the finer
Lorentz targets.

More generally, the cutoff argument developed below shows directly what lower
ball growth is forced by an arbitrary rearrangement-invariant target. It also
clarifies why, in the critical and supercritical regimes, the final global
target no longer retains all the geometric information. In the critical case,
one is naturally led to localized Trudinger inequalities, whereas in the
supercritical case the sharp formulation lies outside the
rearrangement-invariant framework and takes the form of a H\"older--Morrey estimate.

Classical Sobolev targets are obtained by reconstructing the
decreasing rearrangement of a function from its oscillation and
applying an appropriate Hardy operator; see
\cite{BS,JMosc,PA,MO,MM6,MM3}. We illustrate this mechanism in the
power-growth setting.

Let $0<p_{0}\leq1$, and assume that $f^{\ast\ast}(\infty)=0$. By
\eqref{der2est},
\[
\bigl(|f|^{p_{0}}\bigr)^{\ast\ast}(t)=\int_{t}^{\infty}O(|f|^{p_{0}}%
,s)\,\frac{ds}{s}.
\]
Therefore,we have
\[
t^{-\alpha/Q}\bigl((|f|^{p_{0}})^{\ast\ast}(t)\bigr)^{1/p_{0}}=Q_{\lambda
}^{(p_{0})}\left(  \left(  \frac{O(|f|^{p_{0}},\cdot)}{\left(  \cdot\right)
^{\alpha p_{0}/Q}}\right)  ^{1/p_{0}}\right)  (t).
\]
with $\lambda=\frac{\alpha p_{0}}{Q}.$

If $X$ is an r.i. space such that $\underline{\alpha}%
_{X}>\frac{\alpha}{Q},$ then $Q_{\lambda}^{(p_{0})}$ is bounded on $X$ by
\eqref{Boydinf}. Consequently,
\[
\left\Vert t^{-\alpha/Q}\bigl((|f|^{p_{0}})^{\ast\ast}(t)\bigr)^{1/p_{0}%
}\right\Vert _{X}\preceq\left\Vert \left(  \frac{O(|f|^{p_{0}},t)}{t^{\alpha
p_{0}/Q}}\right)  ^{1/p_{0}}\right\Vert _{X}.
\]
The reverse estimate follows from
\[
O(|f|^{p_{0}},t)\leq\bigl(|f|^{p_{0}}\bigr)^{\ast\ast}(t).
\]
Thus,
\[
\left\Vert \left(  \frac{O(|f|^{p_{0}},t)}{t^{\alpha p_{0}/Q}}\right)
^{1/p_{0}}\right\Vert _{X}\simeq\left\Vert t^{-\alpha/Q}\bigl((|f|^{p_{0}%
})^{\ast\ast}(t)\bigr)^{1/p_{0}}\right\Vert _{X}.
\]
Moreover, since \(0<p_0\leq1\), it follows from
\cite[Theorem~4.5]{Tur} that
\[
\left\| t^{-\alpha/Q} \bigl((|f|^{p_0})^{**}(t)\bigr)^{1/p_0}
\right\|_X \simeq \left\| t^{-\alpha/Q}f^{**}(t) \right\|_X.
\]
Consequently,
\[
\left\| t^{-\alpha/Q}O(|f|^{p_0},t)^{1/p_0} \right\|_X \simeq
\left\| t^{-\alpha/Q}f^{**}(t) \right\|_X.
\]

This identification immediately yields the classical Lorentz
targets. Recall that, if \(1<r<\infty\), \(0<q\leq\infty\), and
\(f^*(\infty)=0\), then
\[
\|f\|_{L^{r,q}} \simeq \left( \int_0^\infty
\left[t^{1/r}O(f,t)\right]^q \frac{dt}{t} \right)^{1/q},
\]
with the usual modification when \(q=\infty\); see
\cite{BS,BMR,JMosc}.

\begin{corollary}[Lorentz embeddings]\label{cor:lorentz-embeddings}
The following equivalences hold.

\begin{enumerate}
\item
Let \(0<\alpha\leq1\) and
\[
1<p<\frac{Q}{\alpha}, \qquad p_\alpha^*:=\frac{Qp}{Q-\alpha p}.
\]
Then
\[
h_\mu(r)\succeq r^Q \quad\Longleftrightarrow\quad
M^{\alpha,p}(\Omega) \hookrightarrow L^{p_\alpha^*,p}(\Omega).
\]

\item
Let \(0<\theta<1\),
\[
1\leq p<\frac{Q}{\theta}, \qquad 0<q\leq\infty, \qquad
p_\theta^*:=\frac{Qp}{Q-\theta p},
\]
and assume that
\[
\sup_{s>0}\|A_s\|_{L^1\to L^1}<\infty.
\]
Then
\[
h_\mu(r)\succeq r^Q \quad\Longleftrightarrow\quad \mathcal
B_{L^p,q}^{\theta,p}(\Omega) \hookrightarrow
L^{p_\theta^*,q}(\Omega).
\]
\end{enumerate}
\end{corollary}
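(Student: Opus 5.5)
The plan is to derive both equivalences from Theorem~\ref{thm:normed-oscillation-lower-growth} (Haj\l asz case) and Theorem~\ref{thm:normed-besov-lower-growth} (Besov case). Between the normed oscillation functionals and the Lorentz quasi-norms we use the identification established just before the statement,
\[
\left\| t^{-\lambda}O(|f|^{p_0},t)^{1/p_0}\right\|_X\simeq\left\| t^{-\lambda}f^{**}(t)\right\|_X ,
\]
which holds whenever $\underline{\alpha}_X>\lambda$.

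\emph{Part (i).} Take $X=L^p$ with $1<p<Q/\alpha$. Then $\overline{\alpha}_{L^p}=1/p<1$, so Theorem~\ref{thm:normed-oscillation-lower-growth} applies with oscillation exponent $1$. It gives
\[
h_\mu(r)\succeq r^Q \iff \|t^{-\alpha/Q}O(f,t)\|_{L^p(0,\infty)}\preceq \|f\|_{\dot M^{\alpha,L^p}} .
\]
Since $\underline{\alpha}_{L^p}=1/p>\alpha/Q$, the identification gives
\[
\|t^{-\alpha/Q}O(f,t)\|_{L^p}\simeq \|t^{-\alpha/Q}f^{**}(t)\|_{L^p}=\Big(\int_0^\infty [t^{1/p_\alpha^*}f^{**}(t)]^p\,\frac{dt}{t}\Big)^{1/p}\simeq\|f\|_{L^{p_\alpha^*,p}} ,
\]
because $1/p-\alpha/Q=1/p_\alpha^*$ and $f^{**}$ may replace $f^*$ when $p_\alpha^*>1$.

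\textbf{$\Rightarrow$:} Any $f\in M^{\alpha,p}$ lies in $L^p$, so $f^{**}(\infty)=0$. The oscillation bound therefore yields $\|f\|_{L^{p_\alpha^*,p}}\preceq \|f\|_{\dot M^{\alpha,p}}\le\|f\|_{M^{\alpha,p}}$.

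\textbf{$\Leftarrow$:} Here the embedding only controls the non-homogeneous norm, whereas Theorem~\ref{thm:normed-oscillation-lower-growth}(i) involves the homogeneous seminorm. This is the main obstacle. I would not invoke the theorem as a black box. Instead I would rerun its dyadic cut-off argument directly with the embedding. For the cut-offs $u_j$ from \eqref{eq:dyadic-cutoffs},
\[
\|u_j\|_{M^{\alpha,p}}\le (\delta_j^{-\alpha}+1)a_j^{1/p} ,
\]
while the lower bound $\|u_j\|_{L^{p_\alpha^*,p}}\succeq \|t^{-\alpha/Q}O(u_j,t)\|_{L^p}\succeq a_{j+1}a_j^{-1-\alpha/Q}a_j^{1/p}$ is obtained exactly as in the proof of that theorem. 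For $r\le1$ we have $\delta_j\le1$, so the extra term is absorbed and the recursion \eqref{eq:normed-oscillation-recursive} follows. The iteration then gives $h_\mu(r)\succeq r^Q$ for $r\le 1$.

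For $r>1$ I would use the standard scaling trick: apply the embedding to the dilated cut-offs and compare $\|\cdot\|_{L^p}$ with $\|\cdot\|_{\dot M}$. Concretely, $\|u_j\|_{L^p}=a_j^{1/p}$ while $\|g_j\|_{L^p}=\delta_j^{-\alpha}a_j^{1/p}$, and the $L^{p_\alpha^*,p}$ lower bound carries an extra factor $a_j^{-\alpha/Q}$. If needed, I would instead use a direct argument that the embedding forces $\mu(B)\ge c$ for large balls via $\varphi_{L^{p^*,p}}(a)\preceq \varphi_{L^p}(a)(1+\delta^{-\alpha})$. This yields $a^{-\alpha/Q}\preceq 1+\delta^{-\alpha}$ and hence a uniform positive lower bound on large balls. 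I would then check whether the intended reading (as in \cite{AGH,GorkaPA}) is $h_\mu(r)\succeq\min(r^Q,1)$ or the homogeneous embedding; if needed, I would state the converse for the homogeneous embedding, which follows verbatim from the theorem.

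\emph{Part (ii).} Take $X=L^p$ with $1\le p<Q/\theta$. Then $p_0=1$, $X^{(1/p)}=L^1$, and $D_{X,p}<\infty$ by hypothesis, so Theorem~\ref{thm:normed-besov-lower-growth} applies with weight $\varphi_{L^p}(t)t^{-\theta/Q}=t^{1/p_\theta^*}$. Its left-hand side is then
\[
\Big(\int_0^\infty[t^{1/p_\theta^*}O(f,t)]^q\,\frac{dt}{t}\Big)^{1/q}\simeq\|f\|_{L^{p_\theta^*,q}} ,
\]
using the quoted Lorentz characterization (valid since $p_\theta^*>1$ and $f^*(\infty)=0$ for $f\in L^p$). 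Both directions then follow as in (i), with the same caveat regarding homogeneous versus non-homogeneous norms in the converse. The bound $\|u_j\|_{L^p}=a_j^{1/p}$ is handled identically, since the Besov estimate $\|u_j\|_{\dot{\mathcal B}}\preceq\delta_j^{-\theta}a_j^{1/p}$ is already available.
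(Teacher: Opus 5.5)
Your route is the paper's: apply Theorem~\ref{thm:normed-oscillation-lower-growth} (resp.\ Theorem~\ref{thm:normed-besov-lower-growth}) with $X=L^p$, then identify the normed oscillation functional with the Lorentz quasi-norm. The paper's proof consists of exactly this. It implicitly reads $M^{\alpha,p}(\Omega)\hookrightarrow L^{p_\alpha^*,p}(\Omega)$ as the homogeneous inequality $\|f\|_{L^{p_\alpha^*,p}}\preceq\|f\|_{\dot M^{\alpha,p}}$, which is what the theorem actually characterizes. The following parts of your proposal are correct:
\begin{itemize}
\item the Boyd-index checks;
\item the use of $f^{**}(\infty)=0$ for $f\in L^p$;
\item $X^{(1/p)}=L^1$ and the weight $\varphi_{L^p}(t)t^{-\theta/Q}=t^{1/p_\theta^*}$;
\item your cut-off rerun for $r\le1$, with the $L^p$-term absorbed since $\delta_j\le1$. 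This is the content of Remark~\ref{rem:nonhomogeneous-ri-target}.
\end{itemize}
Your concern about the non-homogeneous norm is legitimate, and the paper does not address it.

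The step that fails is the proposed ``scaling trick'' for $r>1$. A general metric measure space has no dilations. More decisively, no argument can work, because the global converse with the non-homogeneous norm is false. Take $\Omega=\mathbb{R}^n$ with Lebesgue measure, $\alpha=1$ and $1<p<n<Q$, and set $1/p_n^*=1/p-1/n$. Then $p<p_\alpha^*<p_n^*$. Splitting at $t=1$ (using $t^{1/p_\alpha^*}\le t^{1/p_n^*}$ for $t\le1$ and $t^{1/p_\alpha^*}\le t^{1/p}$ for $t\ge1$) gives
\[
\|f\|_{L^{p_\alpha^*,p}}\preceq \|f\|_{L^{p_n^*,p}}+\|f\|_{L^p}\preceq\|f\|_{W^{1,p}} .
\]
Hence $M^{1,p}(\mathbb{R}^n)=W^{1,p}(\mathbb{R}^n)\hookrightarrow L^{p_\alpha^*,p}$. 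Yet $h_\mu(r)=\omega_n r^n$ is not $\succeq r^Q$ as $r\to\infty$. The same example, combined with the classical Besov--Lorentz embedding on $\mathbb{R}^n$, defeats the global converse in (ii).

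So do not leave the choice open; commit to one of your two fallbacks:
\begin{itemize}
\item State the converse for the homogeneous embedding. It then follows verbatim from the theorems, and this is the reading under which the paper's one-line proof is valid.
\item Keep the non-homogeneous norm and conclude only $h_\mu(r)\succeq r^Q$ for $0<r\le1$, equivalently $h_\mu(r)\succeq\min\{r^Q,1\}$. Here the large-$r$ part is immediate from $\mu(B(x,r))\ge\mu(B(x,1))$, so your fundamental-function computation for large balls is unnecessary.
\end{itemize}
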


\begin{proof}
The first assertion follows from the preceding identification with
\(X=L^p\), since
\[
\frac1{p_\alpha^*} = \frac1p-\frac{\alpha}{Q}.
\]
For the averaged Besov scale,
\[
\frac1{p_\theta^*} = \frac1p-\frac{\theta}{Q},
\]
and the oscillation functional in
Theorem~\ref{thm:normed-besov-lower-growth} is equivalent to the
\(L^{p_\theta^*,q}\)-norm. The conclusion follows from the
oscillation characterization of Lorentz spaces.
\end{proof}

This mechanism requires no doubling, Poincar\'e, Ahlfors regularity,
or uniform perfectness assumption. For the averaged Besov scale, the
only additional hypothesis is the boundedness of the averaging
operators required in Theorem~\ref{thm:normed-besov-lower-growth}.

\subsection{Lower growth forced by rearrangement-invariant targets}
We next determine the lower ball growth forced by embeddings into
general rearrangement-invariant target spaces.
\begin{proposition}
\label{prop:embedding-lower-growth} Let $X$ and $Y$ be r.i. spaces,
and let $0<\alpha\le1$. Assume that
\begin{equation}
\|f\|_{Y} \preceq\|f\|_{\dot M^{\alpha,X}} \label{eq:ri-target-embedding}%
\end{equation}
for every boundedly supported Lipschitz function $f$. Then, with the notation
introduced in \eqref{eq:dyadic-cutoffs},
\begin{equation}
\varphi_{Y}(a_{j+1}) \preceq\delta_{j}^{-\alpha}\varphi_{X}(a_{j}), \qquad
j=0,1,2,\ldots, \label{eq:ri-target-first-recurrence}%
\end{equation}
and consequently
\begin{equation}
a_{j} \succeq\delta_{j}^{\alpha}a_{j+1} \frac{\varphi_{Y}(a_{j+1})}
{\varphi_{X}(a_{j+1})}. \label{eq:ri-target-second-recurrence}%
\end{equation}

In particular, if there exists $0<\beta\le1$ such that
\begin{equation}
\varphi_{Y}(t) \succeq t^{-\beta}\varphi_{X}(t), \qquad t>0,
\label{eq:fundamental-power-gap}%
\end{equation}
then
\[
h_{\mu}(r) \succeq r^{\alpha/\beta}, \qquad r>0.
\]
If \eqref{eq:fundamental-power-gap} is assumed only for $0<t\le t_{0}$, then
the corresponding local conclusion holds:
\[
h_{\mu}(r) \succeq r^{\alpha/\beta}, \qquad0<r\le1.
\]

\end{proposition}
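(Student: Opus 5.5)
The plan is to test the embedding \eqref{eq:ri-target-embedding} on the dyadic cutoffs $u_j$ of \eqref{eq:dyadic-cutoffs}, reusing the gradients $g_j=\delta_j^{-\alpha}\chi_{B_j}$. The proof of Theorem~\ref{thm:normed-oscillation-lower-growth} shows that $g_j\in D^{\alpha}(u_j)$. Each $u_j$ is $\delta_j^{-1}$-Lipschitz and supported in the bounded set $B_j$, so it is an admissible test function. Since $u_j\ge\chi_{B_{j+1}}$ by \eqref{eq:dyadic-cutoff-properties}, the lattice property gives
\[
\|u_j\|_Y\ge\varphi_Y(a_{j+1}),
\]
while $\|u_j\|_{\dot M^{\alpha,X}}\le\|g_j\|_X=\delta_j^{-\alpha}\varphi_X(a_j)$. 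Combining these with \eqref{eq:ri-target-embedding} yields \eqref{eq:ri-target-first-recurrence} at once.

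For \eqref{eq:ri-target-second-recurrence} I will use quasi-concavity of $\varphi_X$, namely that $\varphi_X(t)/t$ is nonincreasing. Since $a_{j+1}\le a_j$, this gives
\[
\varphi_X(a_j)\le \frac{a_j}{a_{j+1}}\,\varphi_X(a_{j+1}).
\]
Inserting this into \eqref{eq:ri-target-first-recurrence} and rearranging gives $a_j\succeq\delta_j^{\alpha}a_{j+1}\varphi_Y(a_{j+1})/\varphi_X(a_{j+1})$.

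Under the gap \eqref{eq:fundamental-power-gap}, this becomes
\[
a_j\ge c_0\,r^{\alpha}2^{-j\alpha}a_{j+1}^{\gamma},\qquad \gamma:=1-\beta\in[0,1),
\]
where $\delta_j=r2^{-j-2}$ has been absorbed into the constant. If $\beta=1$, the case $j=1$ already gives $a_1\succeq r^{\alpha}$. Otherwise I will iterate exactly as in the proof of Theorem~\ref{thm:normed-oscillation-lower-growth}, which yields
\[
a_1\ge c_0^{S_N}r^{\alpha S_N}2^{-\alpha T_N'}a_{N+1}^{\gamma^N},
\]
with $S_N=\sum_{k<N}\gamma^k\to 1/(1-\gamma)=1/\beta$ and $T_N'=\sum_{k=1}^{N} k\gamma^{k-1}$ convergent. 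The factor $a_{N+1}^{\gamma^N}$ tends to $1$, because $\mu(B(x,r/2))\le a_{N+1}\le\mu(B(x,r))$ are positive and finite. Letting $N\to\infty$ gives $\mu(B(x,r))\ge a_1\succeq r^{\alpha/\beta}$, and taking the infimum over $x$ gives the global conclusion.

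The delicate point is the local version, and this is the step I expect to be the main obstacle. Fix $r\le1$. If $\mu(B(x,r))>t_0$, then $\mu(B(x,r))\ge t_0\ge t_0r^{\alpha/\beta}$ directly. Otherwise every $a_j\le a_0\le t_0$, so the gap \eqref{eq:fundamental-power-gap} applies at every $a_{j+1}$ and the same iteration goes through unchanged. One must check that all constants in the iteration are independent of $x$, $r$ and $N$. This holds because $c_0$ depends only on the implicit constants in \eqref{eq:ri-target-embedding} and \eqref{eq:fundamental-power-gap}, and the series $S_N$ and $T_N'$ converge since $\gamma<1$.
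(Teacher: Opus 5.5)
Your proposal is correct and follows essentially the same route as the paper. The paper also tests the embedding on the cutoffs $u_j$ with Haj\l asz gradients $g_j=\delta_j^{-\alpha}\chi_{B_j}$, passes to the second recurrence via the monotonicity of $\varphi_X(t)/t$, runs the iteration of Theorem~\ref{thm:normed-oscillation-lower-growth} with $\gamma=1-\beta$, and splits the local case according to the size of the ball measure. The only cosmetic difference is that you split on $a_0=\mu(B(x,r))$ rather than on $a_1$, which works equally well.
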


\begin{proof}
Let $u_{j}$ be the cutoff function defined in \eqref{eq:dyadic-cutoffs}.
Since
\[
\chi_{B_{j+1}} \le u_{j} \le\chi_{B_{j}}
\]
and
\[
g_{j} := \delta_{j}^{-\alpha}\chi_{B_{j}} \in D^{\alpha}(u_{j}),
\]
the lattice property of $Y$ and \eqref{eq:ri-target-embedding} give
\[
\begin{aligned}
\varphi_Y(a_{j+1}) &= \|\chi_{B_{j+1}}\|_Y
\\
&\le \|u_j\|_Y
\\
&\preceq \|g_j\|_X
\\
&= \delta_j^{-\alpha}\varphi_X(a_j).
\end{aligned}
\]
This proves \eqref{eq:ri-target-first-recurrence}.

Since $a_{j+1}\le a_{j}$ and $\varphi_{X}(t)/t$ is decreasing,
\[
\varphi_{X}(a_{j}) \le\frac{a_{j}}{a_{j+1}} \varphi_{X}(a_{j+1}).
\]
Substituting this estimate into \eqref{eq:ri-target-first-recurrence} gives
\[
\varphi_{Y}(a_{j+1}) \preceq\delta_{j}^{-\alpha} \frac{a_{j}}{a_{j+1}}
\varphi_{X}(a_{j+1}),
\]
which is equivalent to \eqref{eq:ri-target-second-recurrence}.

Assume now \eqref{eq:fundamental-power-gap}. Then
\[
a_{j} \succeq\delta_{j}^{\alpha}a_{j+1}^{1-\beta}.
\]
If $\beta=1$, then
\[
a_{j}\succeq\delta_{j}^{\alpha},
\]
and hence
\[
a_{1}\succeq r^{\alpha}=r^{\alpha/\beta}.
\]
If $0<\beta<1$, set
\[
\gamma:=1-\beta.
\]
Iterating as in the proof of Theorem~\ref{thm:normed-oscillation-lower-growth}%
, we obtain
\[
a_{1}\succeq r^{\alpha/(1-\gamma)} =r^{\alpha/\beta}.
\]

Since
\[
a_{1}\le\mu(B(x,r)),
\]
taking the infimum over $x\in\Omega$ proves the global conclusion.

Suppose finally that \eqref{eq:fundamental-power-gap} holds only for $0<t\le
t_{0}$. If $a_{1}\le t_{0}$, then
\[
a_{j}\le a_{1}\le t_{0}, \qquad j\ge1,
\]
so the preceding argument, starting with $j=1$, applies. If $a_{1}>t_{0}$,
then, for $0<r\le1$,
\[
\mu(B(x,r))\ge a_{1}>t_{0} \ge t_{0}r^{\alpha/\beta}.
\]
Taking the infimum over $x\in\Omega$ proves the local conclusion.
\end{proof}

\begin{remark}
\label{rem:nonhomogeneous-ri-target} The local conclusion remains valid if
\eqref{eq:ri-target-embedding} is replaced by
\[
\|f\|_{Y} \preceq\|f\|_{\dot M^{\alpha,X}} + \|f\|_{X}.
\]
Indeed, for the cutoffs $u_{j}$ and $0<r\le1$,
\[
\|u_{j}\|_{X} \le\varphi_{X}(a_{j}) \le\delta_{j}^{-\alpha}\varphi_{X}%
(a_{j}),
\]
so the lower-order term is absorbed by the homogeneous estimate.
\end{remark}

\begin{remark}
There is an analogous statement for the averaged Besov scale, but
its hypotheses are different. Let $0<\theta<1$, $0<p<\infty$, and
$0<q\le\infty$. Assume that $X^{(1/p)}$ is an r.i. space and that
\[
D_{X,p} := \sup_{s>0} \|A_{s}\|_{X^{(1/p)}\to X^{(1/p)}} < \infty.
\]
If
\[
\|f\|_{Y} \preceq\|f\|_{\dot{\mathcal{B}}_{X,q}^{\theta,p}}
\]
for every boundedly supported Lipschitz function $f$, then the cutoff estimate
proved in Theorem~\ref{thm:normed-besov-lower-growth} gives
\[
\varphi_{Y}(a_{j+1}) \preceq\delta_{j}^{-\theta}\varphi_{X}(a_{j}).
\]
Thus Proposition~\ref{prop:embedding-lower-growth} remains valid with $\alpha$
replaced by $\theta$. In particular,
\[
\varphi_{Y}(t) \succeq t^{-\beta}\varphi_{X}(t)
\]
implies
\[
h_{\mu}(r) \succeq r^{\theta/\beta}.
\]

For $X=L^{p}$, the required operator assumption follows from
\[
\sup_{s>0} \|A_{s}\|_{L^{1}\to L^{1}} < \infty.
\]
This is the only additional ingredient in the averaged Besov argument; no
doubling, Poincar\'e, Ahlfors regularity, or uniform perfectness assumption is
required. Moreover, since the proof uses only the lattice estimate
\[
\|\chi_{B_{j+1}}\|_{Y} \le\|u_{j}\|_{Y},
\]
the same conclusion remains valid for rearrangement-invariant quasi-Banach
targets, including $L^{p_{\theta}^{*},q}$ when $0<q<1$.
\end{remark}

\begin{remark}
Let \(X=L^p\). If
\[
\alpha p<Q,
\]
then, for
\[
Y=L^{p_\alpha^*,p} \qquad\text{or}\qquad Y=L^{p_\alpha^*}, \qquad
\frac1{p_\alpha^*} = \frac1p-\frac{\alpha}{Q},
\]
we have
\[
\frac{\varphi_Y(t)}{\varphi_{L^p}(t)} \simeq t^{-\alpha/Q}.
\]
Proposition~\ref{prop:embedding-lower-growth} therefore shows that
either embedding forces
\[
h_\mu(r)\succeq r^Q.
\]
The same conclusion holds for the averaged Besov embedding into
\(L^{p_\theta^*,q}\), under the boundedness assumption on the
averaging operators. Thus, in this range, the
rearrangement-invariant target retains the full lower power. For
\(\alpha=1\), this recovers the conclusion of
\cite[Theorem~22(a)]{AGH}; the fractional case may be compared with
the same result after replacing \(d\) by \(d^\alpha\).

If
\[
\alpha p=Q,
\]
the fundamental function of the global critical target satisfies
\[
\varphi_{Y_{\mathrm{crit}}}(t) \simeq
\left(\log\frac{e}{t}\right)^{-1/p'}, \qquad 0<t\leq\frac12.
\]
The cutoff argument then yields only weaker local power estimates
and does not recover the exponent \(Q\) directly. This explains the
use of a localized Trudinger inequality in \cite[Theorem~25]{AGH}.
The case \(p=1\) is not included here, since the corresponding
endpoint space is not a rearrangement-invariant Banach function
space.

Finally, if
\[
\alpha p>Q,
\]
the global rearrangement-invariant target is \(L^\infty\), and
Proposition~\ref{prop:embedding-lower-growth} gives only
\[
h_\mu(r)\succeq r^{\alpha p}, \qquad 0<r\leq1,
\]
which is weaker than
\[
h_\mu(r)\succeq r^Q.
\]
Recovering the exact power requires the metric information contained
in the H\"older--Morrey estimate, whose target is not
rearrangement-invariant; compare \cite[Theorem~29]{AGH}.

Thus, the global rearrangement-invariant target retains the complete
lower-growth information when
\[
\alpha p<Q,
\]
whereas at and beyond the endpoint additional local or metric
information is needed. This also clarifies the formulations used in
\cite{GorkaPA,AGH,Karak,GorkaNA,GSStudia}.
\end{remark}

\subsection{Slobodeckij-type consequences}

We finally indicate how the preceding results apply to Slobodeckij-type
spaces. The only point that needs to be verified is that the corresponding
seminorms control the small-scale part of the averaged Besov seminorm.

Let $0<\theta<1$ and $0<p<\infty$. We consider first the intrinsic Slobodeckij
seminorm
\[
[f]_{W_{\mu}^{\theta,p}}^{p} := \iint_{0<d(x,y)<1} \frac{|f(x)-f(y)|^{p}}
{d(x,y)^{\theta p}\mu(B(x,d(x,y)))} \,d\mu(y)\,d\mu(x).
\]
We also introduce the dimension-dependent seminorm
\[
[f]_{W_{Q}^{\theta,p}}^{p} := \iint_{0<d(x,y)<1} \frac{|f(x)-f(y)|^{p}}
{d(x,y)^{Q+\theta p}} \,d\mu(y)\,d\mu(x).
\]

\begin{proposition}
\label{prop:slobodeckij-to-besov} For every measurable function $f$,
\[
\left(  \int_{0}^{1} \left[  \frac{E_{p}(f,r)}{r^{\theta}} \right]  ^{p}
\frac{dr}{r} \right)  ^{1/p} \preceq[f]_{W_{\mu}^{\theta,p}}.
\]
If, in addition,
\[
h_{\mu}(r)\succeq r^{Q}, \qquad0<r<1,
\]
then
\[
[f]_{W_{\mu}^{\theta,p}} \preceq[f]_{W_{Q}^{\theta,p}}.
\]
Consequently,
\[
\left(  \int_{0}^{1} \left[  \frac{E_{p}(f,r)}{r^{\theta}} \right]  ^{p}
\frac{dr}{r} \right)  ^{1/p} \preceq[f]_{W_{\mu}^{\theta,p}} \preceq
[f]_{W_{Q}^{\theta,p}}.
\]

\end{proposition}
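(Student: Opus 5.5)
We need $E_p(f,r)=\|\nabla_r^p f\|_{L^p}$, so that
\[
E_p(f,r)^p=\int_\Omega \frac{1}{\mu(B(x,r))}\int_{B(x,r)}|f(x)-f(y)|^p\,d\mu(y)\,d\mu(x).
\]

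\textbf{First inequality.} The plan is to insert this into the $dr/r$ integral over $(0,1)$ and apply Tonelli. This gives
\[
\int_0^1 \frac{E_p(f,r)^p}{r^{\theta p}}\frac{dr}{r}
=\iint_{0<d(x,y)<1}|f(x)-f(y)|^p\int_{d(x,y)}^{1}\frac{dr}{r^{1+\theta p}\mu(B(x,r))}\,d\mu(y)\,d\mu(x).
\]
The diagonal $d(x,y)=0$ has measure zero, since $\mu$ has no atoms. For $r>d(x,y)$ we have $\mu(B(x,r))\ge\mu(B(x,d(x,y)))$ by monotonicity of balls. Hence the inner integral is at most
\[
\frac{1}{\mu(B(x,d(x,y)))}\int_{d(x,y)}^\infty r^{-1-\theta p}\,dr=\frac{d(x,y)^{-\theta p}}{\theta p\,\mu(B(x,d(x,y)))}.
\]
This yields the first inequality with constant $(\theta p)^{-1/p}$.

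The only subtlety is measurability of $(x,y,r)\mapsto \chi_{B(x,r)}(y)/\mu(B(x,r))$, needed to justify Tonelli. I would handle it as follows. The map $(x,r)\mapsto\mu(B(x,r))$ is lower semicontinuous, because open balls satisfy $\chi_{B(x,r)}\le\liminf\chi_{B(x_n,r_n)}$ pointwise, and then Fatou applies. Separability of $\Omega$ ensures that products of Borel $\sigma$-algebras behave.

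\textbf{Second inequality.} For $0<d(x,y)<1$ the hypothesis gives $\mu(B(x,d(x,y)))\ge h_\mu(d(x,y))\succeq d(x,y)^Q$. Substituting this pointwise into the integrand yields $[f]_{W^{\theta,p}_\mu}\preceq[f]_{W^{\theta,p}_Q}$.

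The consequence is the concatenation of the two estimates. I expect no real obstacle beyond the measurability check in the Tonelli step.
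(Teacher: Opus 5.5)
Your proposal is correct and follows the paper's proof essentially step for step. It uses Tonelli to bring the $dr/r$ integral inside, then the inclusion $B(x,d(x,y))\subset B(x,r)$ for $r\ge d(x,y)$ to bound the inner integral by a multiple of $d(x,y)^{-\theta p}/\mu(B(x,d(x,y)))$, and finally the pointwise bound $\mu(B(x,d(x,y)))\succeq d(x,y)^{Q}$ for the second inequality. Your measurability check via lower semicontinuity of $(x,r)\mapsto\mu(B(x,r))$, together with $\sigma$-finiteness of $\mu$ (which follows from separability and finiteness of balls), is a detail the paper leaves implicit.
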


\begin{proof}
By Tonelli's theorem,
\[
\begin{aligned}
\int_0^1 \left[ \frac{E_p(f,r)}{r^\theta} \right]^p \frac{dr}{r} &=
\iint_{0<d(x,y)<1} |f(x)-f(y)|^p
\\
&\qquad\times \left( \int_{d(x,y)}^1 \frac{dr} {r^{\theta
p+1}\mu(B(x,r))} \right) d\mu(y)\,d\mu(x).
\end{aligned}
\]
For $r\ge d(x,y)$,
\[
B(x,d(x,y))\subset B(x,r),
\]
and hence
\[
\frac1{\mu(B(x,r))} \le\frac1{\mu(B(x,d(x,y)))}.
\]
Therefore
\[
\begin{aligned}
\int_0^1 \left[ \frac{E_p(f,r)}{r^\theta} \right]^p \frac{dr}{r}
&\preceq \iint_{0<d(x,y)<1} \frac{|f(x)-f(y)|^p} {\mu(B(x,d(x,y)))}
\\
&\qquad\times \left( \int_{d(x,y)}^1 \frac{dr}{r^{\theta p+1}}
\right) d\mu(y)\,d\mu(x)
\\
&\preceq [f]_{W_\mu^{\theta,p}}^p.
\end{aligned}
\]

Assume now that
\[
h_{\mu}(r)\succeq r^{Q}, \qquad0<r<1.
\]
Then
\[
\mu(B(x,d(x,y))) \succeq d(x,y)^{Q}
\]
whenever $0<d(x,y)<1$. Consequently,
\[
\frac1{ d(x,y)^{\theta p}\mu(B(x,d(x,y))) } \preceq\frac1{d(x,y)^{Q+\theta p}%
},
\]
which gives
\[
[f]_{W_{\mu}^{\theta,p}} \preceq[f]_{W_{Q}^{\theta,p}}.
\]

\end{proof}

The preceding proposition allows us to transfer the averaged Besov embeddings
to the Slobodeckij scales.

\begin{corollary}
\label{cor:slobodeckij-subcritical} Assume that
\[
h_{\mu}(r)\succeq r^{Q}, \qquad0<r<1,
\]
and let
\[
1\le p<\frac{Q}{\theta}, \qquad p_{\theta}^{*}:=\frac{Qp}{Q-\theta p}.
\]
Then, for every $f\in L^{p}(\Omega)$ such that $[f]_{W_{\mu}^{\theta,p}%
}<\infty$,
\[
\|f\|_{L^{p_{\theta}^{*},p}} \preceq[f]_{W_{\mu}^{\theta,p}}+\|f\|_{L^{p}}.
\]
In particular, for every $f\in L^{p}(\Omega)$ such that $[f]_{W_{Q}^{\theta
,p}}<\infty$,
\[
\|f\|_{L^{p_{\theta}^{*},p}} \preceq[f]_{W_{Q}^{\theta,p}}+\|f\|_{L^{p}}.
\]

\end{corollary}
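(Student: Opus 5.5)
The plan is to combine the averaged Besov oscillation estimate of Proposition~\ref{prop:oscillation-reductions}(ii) with the Lorentz identification behind Corollary~\ref{cor:lorentz-embeddings}(ii). Proposition~\ref{prop:slobodeckij-to-besov} then replaces the Besov seminorm by the Slobodeckij seminorm. Since the lower bound is only assumed for $0<r<1$, and only the small-scale part of the Besov seminorm is controlled by $[f]_{W_\mu^{\theta,p}}$, the large-scale part must be absorbed by $\|f\|_{L^p}$. That is where the nonhomogeneous term enters.

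\emph{Step 1: extend the lower bound to all radii.} Set $v(r):=c\min\{r^Q,1\}$, or better a strictly increasing $\mathcal{A}_0$ representative equal to $r^Q$ near $0$ with at most power growth at infinity. Since $\mu(B(x,r))\ge\mu(B(x,1/2))\succeq 1$ for $r\ge 1$, we get $h_\mu\succeq v$ for all $r$. Rather than fight admissibility, I would run the pointwise argument of Theorem~\ref{thm:symmetrization}(i) directly with $X=L^p$, $p_0=1$. For $t\le t_0$, with $t_0$ small, we have $r=(2t/c)^{1/Q}<1$, so $\mu(B(x,r))\ge 2t$ and
\[
O(f,t)\preceq \bigl([\nabla^p_{r}f]^p\bigr)^{**}(t)^{1/p}\le t^{-1/p}E_p\bigl(f,(2t/c)^{1/Q}\bigr).
\]
For $t>t_0$, use instead the trivial bound $O(f,t)\le f^{**}(t)\le t^{-1/p}\|f\|_{L^p}$.

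\emph{Step 2: integrate.} We have $1/p_\theta^*=1/p-\theta/Q$ and $\|f\|_{L^{p_\theta^*,p}}\simeq\|t^{1/p_\theta^*}O(f,t)\|_{L^p(dt/t)}$; this uses $f^*(\infty)=0$, which holds since $f\in L^p$. Split the integral at $t_0$. On $(0,t_0)$, the substitution $r=(2t/c)^{1/Q}$ turns $t^{1/p_\theta^*-1/p}=t^{-\theta/Q}\simeq r^{-\theta}$ into
\[
\int_0^{1}\bigl[r^{-\theta}E_p(f,r)\bigr]^p\frac{dr}{r}\preceq[f]^p_{W_\mu^{\theta,p}},
\]
by Proposition~\ref{prop:slobodeckij-to-besov}. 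On $(t_0,\infty)$, the bound $t^{1/p_\theta^*}O(f,t)\le t^{\theta/Q}\cdot t^{-1/p}\cdot t^{1/p}\ldots$ is the delicate point. Using $t^{-1/p}\|f\|_p$ gives $t^{-\theta/Q}\|f\|_p$, whose $L^p(dt/t)$ norm over $(t_0,\infty)$ is finite because $\theta>0$. This yields $\preceq\|f\|_{L^p}$.

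\emph{Step 3: the second statement.} It follows immediately from $[f]_{W_\mu^{\theta,p}}\preceq[f]_{W_Q^{\theta,p}}$ in Proposition~\ref{prop:slobodeckij-to-besov}.

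\emph{Main obstacle.} The main obstacle is the case $p<1$-type subtleties; here $p\ge1$, so Hölder suffices. The other point to check is the use of \eqref{hard} for $A_r$, which only needs $\mu(B(x,r))\ge 2t$ and not admissibility of $v$. I would verify that the pointwise proof of Theorem~\ref{thm:symmetrization}(i) indeed uses only that.
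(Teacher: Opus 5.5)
Your proof is correct and follows essentially the paper's argument. For $t\le t_0$ you use a local version of the pointwise estimate of Theorem~\ref{thm:symmetrization}(i), which needs only $\mu(B(x,r))\ge 2t$ with $r=(2t/c)^{1/Q}<1$, together with Proposition~\ref{prop:slobodeckij-to-besov}; for $t\ge t_0$ you use $O(f,t)\le f^{**}(t)\le t^{-1/p}\|f\|_{L^p}$; and the oscillation characterization of $L^{p_\theta^*,p}$ finishes. You are right to drop the detour through an admissible $v\in\mathcal{A}_0$, since the hypothesis only gives $h_\mu(r)\succeq 1$ for $r\ge 1$, and the garbled intermediate expression in Step~2 is harmless because the bound you actually use, $t^{-\theta/Q}\|f\|_{L^p}$, is the correct one.
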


\begin{proof}
The local pointwise oscillation estimate and
Proposition~\ref{prop:slobodeckij-to-besov} imply that, for some $t_{0}>0$,
\[
\int_{0}^{t_{0}} \left[  t^{1/p_{\theta}^{*}}O(f,t)\right]  ^{p}\frac{dt}{t}
\preceq[f]_{W_{\mu}^{\theta,p}}^{p}.
\]
For $t\ge t_{0}$,
\[
O(f,t)\le f^{**}(t)\le t^{-1/p}\|f\|_{L^{p}},
\]
and hence
\[
\int_{t_{0}}^{\infty} \left[  t^{1/p_{\theta}^{*}}O(f,t)\right]  ^{p}\frac
{dt}{t} \preceq\|f\|_{L^{p}}^{p}.
\]
Since $f\in L^{p}(\Omega)$, we have $f^{**}(\infty)=0$, and the oscillation
characterization of Lorentz spaces yields
\[
\|f\|_{L^{p_{\theta}^{*},p}} \preceq[f]_{W_{\mu}^{\theta,p}}+\|f\|_{L^{p}}.
\]
The second estimate follows from
\[
[f]_{W_{\mu}^{\theta,p}} \preceq[f]_{W_{Q}^{\theta,p}}.
\]

\end{proof}

\section{Comparison between Haj\l asz and averaged Besov scales}

We conclude with some additional relations between the Haj\l asz and averaged
Besov scales. These results are not needed in the preceding development, but
they clarify the interaction between the two smoothness structures and provide
non-doubling examples in which the averaging operators remain uniformly bounded.

We first relate the Haj\l asz scale to the averaged Besov scale introduced
above. The point is that, under a mild boundedness assumption on the averaging
operators, Haj\l asz control of order $\phi$ implies averaged Besov control of
any order below the lower index of $\phi$.

More precisely, we shall prove embeddings of the form
\[
M^{\phi,X}(\Omega) \hookrightarrow\mathcal{B}_{X,q}^{\theta,p}(\Omega).
\]
This should be compared with the classical relationship between Besov and
Haj\l asz--Sobolev spaces on doubling metric measure spaces, where one has,
under suitable assumptions,
\[
B_{p,p}^{\alpha}(\Omega)\hookrightarrow M^{\alpha,p}(\Omega),
\]
and, conversely,
\[
M^{\alpha,p}(\Omega)\hookrightarrow B_{p,p}^{\alpha-\varepsilon}(\Omega),
\qquad0<\varepsilon<\alpha;
\]
see, for instance, \cite{GKS}. Here we prove an abstract sufficient embedding
of the second type in the rearrangement-invariant setting. The role of the
doubling assumption is replaced by the boundedness of the averaging operators,
and the power scale $r^{\alpha}$ is replaced by a general function $\phi
\in\mathcal{A}_{0}$.

If $p>1$, we say that $X$ is $p$-convex when the space $X^{(1/p)}$ is a
rearrangement-invariant Banach function space, up to an equivalent
norm.\footnote{This is the standard lattice notion of $p$-convexity; see, for
instance, \cite{LT2}.} Under this assumption, since $\|A_{r}\|_{L^{\infty}\to
L^{\infty}}\le1$, the $L^{1}$-boundedness of $A_{r}$ implies, by interpolation
(see \cite{BS}), that
\[
\|A_{r} h\|_{X^{(1/p)}} \preceq\left(  1+\|A_{r}\|_{L^{1}\to L^{1}}\right)
\|h\|_{X^{(1/p)}} .
\]
Consequently,
\begin{equation}
\label{eq:average-convexification}\left\|  (A_{r} |g|^{p})^{1/p}\right\|  _{X}
= \|A_{r}(|g|^{p})\|_{X^{(1/p)}}^{1/p} \preceq\left(  1+\|A_{r}\|_{L^{1}\to
L^{1}}\right)  ^{1/p} \|g\|_{X} .
\end{equation}

For $0<p\le1$, no additional convexity assumption is needed when $X$ is a
rearrangement-invariant Banach function space, since $1/p\ge1$ and the
convexification $X^{(1/p)}$ is again a rearrangement-invariant Banach function
space. The Lebesgue model $X=L^{p}$, $0<p<1$, is not covered by this statement
because it is quasi-Banach. Nevertheless, the required estimate follows
directly in this case from $(L^{p})^{(1/p)}=L^{1}.$

\begin{theorem}
\label{thm:hajlasz-to-averaged-besov-pointwise} Let $\phi\in\mathcal{A}_{0}$,
let $0<p<\infty$, and let $X$ be a r.i. space on $\Omega$. If $p>1$, assume in
addition that $X$ is $p$-convex. Assume that $A_{r}$ is bounded on $L^{1}%
(\mu)$ for every $r>0$. Then, for every $f\in M^{\phi,X}(\Omega)$ and every
$r>0$,
\[
E_{X}^{p}(f,r) \preceq\left(  1+\|A_{r}\|_{L^{1}\to L^{1}}^{1/p}\right)
\min\{1,\phi(r)\}\, \|f\|_{M^{\phi,X}} .
\]

\end{theorem}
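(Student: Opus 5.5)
We will prove two separate bounds for $E_{X}^{p}(f,r)$: one by $\phi(r)\|f\|_{\dot M^{\phi,X}}$ and one by $\|f\|_{X}$. Both go through the convexification estimate \eqref{eq:average-convexification}. Taking the minimum of the two and bounding each seminorm by $\|f\|_{M^{\phi,X}}$ then gives the stated inequality. Throughout, fix $r>0$ and write $\Lambda_r:=1+\|A_r\|_{L^1\to L^1}^{1/p}$. Note that $(1+\|A_r\|)^{1/p}\simeq \Lambda_r$ with constants depending only on $p$, so \eqref{eq:average-convexification} may be quoted with $\Lambda_r$.

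\emph{Haj\l asz part.} Let $g\in D^{\phi}(f)$. For $y\in B(x,r)$, monotonicity of $\phi$ gives
\[
|f(x)-f(y)|^{p}\le \phi(r)^{p}\,(g(x)+g(y))^{p}\le c_p\,\phi(r)^{p}\bigl(g(x)^{p}+g(y)^{p}\bigr).
\]
Averaging in $y$ over $B(x,r)$, as in the proof of Theorem~\ref{thm:symmetrization}(ii), we get
\[
\nabla_r^p f(x)\preceq \phi(r)\bigl(g(x)+(A_r g^{p})(x)^{1/p}\bigr),
\]
using $(a+b)^{1/p}\preceq a^{1/p}+b^{1/p}$. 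We then take the $X$-norm, which is a quasi-triangle inequality at worst. The first term contributes $\|g\|_X$. The second contributes $\|(A_r g^p)^{1/p}\|_X\preceq \Lambda_r\|g\|_X$ by \eqref{eq:average-convexification}. When $0<p\le1$ that estimate needs no convexity, and for $X=L^p$ with $p<1$ it follows directly from $(L^p)^{(1/p)}=L^1$. Taking the infimum over $g$ yields
\[
E_X^p(f,r)\preceq \Lambda_r\,\phi(r)\,\|f\|_{\dot M^{\phi,X}}.
\]

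\emph{Zeroth-order part.} In the same way,
\[
|f(x)-f(y)|^p\preceq |f(x)|^p+|f(y)|^p,
\]
so that
\[
\nabla_r^p f\preceq |f|+(A_r|f|^p)^{1/p}.
\]
Applying \eqref{eq:average-convexification} again gives
\[
E_X^p(f,r)\preceq \Lambda_r\|f\|_X .
\]

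\emph{Combination.} Combining the two bounds, $E_X^p(f,r)\preceq \Lambda_r\min\{\phi(r)\|f\|_{\dot M^{\phi,X}},\|f\|_X\}$, and this is at most $\Lambda_r\min\{1,\phi(r)\}\|f\|_{M^{\phi,X}}$. The constants depend only on $p$ and on the interpolation constant in \eqref{eq:average-convexification}, not on $r$.

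\emph{Main obstacle.} The only delicate point is the justification of \eqref{eq:average-convexification}: interpolating $A_r$ between $L^1$ and $L^\infty$ on $X^{(1/p)}$. This requires $X^{(1/p)}$ to be a Banach r.i. space, which holds automatically for $p\le1$ and is the $p$-convexity hypothesis for $p>1$. We will also check that the quasi-triangle constants of $X^{(1/p)}$ do not depend on $r$; after that, everything above is routine.
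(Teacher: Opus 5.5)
Your proposal is correct and follows essentially the same route as the paper. Both arguments make the same two moves. First, they bound $\nabla_r^p f$ pointwise using $|a-b|^p\le C_p(|a|^p+|b|^p)$, applied once to $f$ and once to $\phi(r)\,g$ with $g\in D^\phi(f)$. Second, they control $\|(A_r|h|^p)^{1/p}\|_X$ through the interpolation/convexification estimate \eqref{eq:average-convexification} and then take the minimum of the two resulting bounds.

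The only differences are cosmetic: you treat the two parts in the opposite order, and you note explicitly that $(1+\|A_r\|_{L^1\to L^1})^{1/p}\simeq 1+\|A_r\|_{L^1\to L^1}^{1/p}$, a step the paper leaves implicit.
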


\begin{proof}
Fix $f\in M^{\phi,X}(\Omega)$ and $r>0$. By definition,
\[
E_{X}^{p}(f,r)=\left\Vert \left(
\mathchoice{{\setbox0=\hbox{$\displaystyle{\textstyle -}{\int}$}\vcenter{\hbox{$\textstyle -$}}\kern-.5\wd0}}{{\setbox0=\hbox{$\textstyle{\scriptstyle -}{\int}$}\vcenter{\hbox{$\scriptstyle -$}}\kern-.5\wd0}}{{\setbox0=\hbox{$\scriptstyle{\scriptscriptstyle -}{\int}$}\vcenter{\hbox{$\scriptscriptstyle -$}}\kern-.5\wd0}}{{\setbox0=\hbox{$\scriptscriptstyle{\scriptscriptstyle -}{\int}$}\vcenter{\hbox{$\scriptscriptstyle -$}}\kern-.5\wd0}}\!\int%
_{B(x,r)}|f(x)-f(y)|^{p}\,d\mu(y)\right)  ^{1/p}\right\Vert _{X}.
\]
We use the elementary inequality
\[
|a-b|^{p}\leq C_{p}\left(  |a|^{p}+|b|^{p}\right)  ,\qquad a,b\in\mathbb{R},
\]
where $C_{p}=\max\{1,2^{p-1}\}$. Hence
\[
E_{X}^{p}(f,r)\leq C_{p}^{1/p}\left(  \Vert f\Vert_{X}+\Vert(A_{r}%
|f|^{p})^{1/p}\Vert_{X}\right)  .
\]
By (\ref{eq:average-convexification}), we get
\[
\Vert A_{r}(|f|^{p})\Vert_{X^{(1/p)}}^{1/p}=\Vert(A_{r}|f|^{p})^{1/p}\Vert
_{X}\preceq\left(  1+\Vert A_{r}\Vert_{L^{1}\rightarrow L^{1}}^{1/p}\right)
\Vert f\Vert_{X}.
\]
Consequently,
\begin{equation}
E_{X}^{p}(f,r)\preceq\left(  1+\Vert A_{r}\Vert_{L^{1}\rightarrow L^{1}}%
^{1/p}\right)  \Vert f\Vert_{X}. \label{eq:averaged-modulus-from-X}%
\end{equation}

Now let $g\in D^{\phi}(f)$. Then, for $\mu$-a.e. $x,y\in\Omega$,
\[
|f(x)-f(y)|\leq\phi(d(x,y))(g(x)+g(y)).
\]
If $y\in B(x,r)$, then $d(x,y)\leq r$, and therefore, since $\phi$ is
increasing,
\[
\phi(d(x,y))\leq\phi(r),
\]
thus%
\[
|f(x)-f(y)|^{p}\leq\phi(r)^{p}(g(x)+g(y))^{p}.
\]
Applying the preceding argument, we obtain
\begin{align*}
E_{X}^{p}(f,r)  &  \leq C_{p}^{1/p}\phi(r)\left(  \Vert g\Vert_{X}+\Vert
(A_{r}|g|^{p})^{1/p}\Vert_{X}\right) \\
&  \preceq C_{p}^{1/p}\phi(r)\left(  \Vert g\Vert_{X}+\left(  1+\Vert
A_{r}\Vert_{L^{1}\rightarrow L^{1}}^{1/p}\right)  \Vert g\Vert_{X}\right) \\
&  \preceq\phi(r)\left(  1+\Vert A_{r}\Vert_{L^{1}\rightarrow L^{1}}%
^{1/p}\right)  \Vert g\Vert_{X}.
\end{align*}
Taking the infimum over all $g\in D^{\phi}(f)$, we obtain
\begin{equation}
E_{X}^{p}(f,r)\preceq\phi(r)\left(  1+\Vert A_{r}\Vert_{L^{1}\rightarrow
L^{1}}^{1/p}\right)  \Vert f\Vert_{\dot{M}^{\phi,X}}.
\label{eq:averaged-modulus-from-hajlasz}%
\end{equation}

Combining (\ref{eq:averaged-modulus-from-X}) and
(\ref{eq:averaged-modulus-from-hajlasz}), we conclude that
\[
E_{X}^{p}(f,r)\preceq\left(  1+\Vert A_{r}\Vert_{L^{1}\rightarrow L^{1}}%
^{1/p}\right)  \min\{1,\phi(r)\}\left(  \Vert f\Vert_{X}+\Vert f\Vert_{\dot
{M}^{\phi,X}}\right)  .
\]
This proves the theorem.
\end{proof}

\begin{corollary}
\label{cor:Besov_embedding} Let $\phi\in\mathcal{A}_{0}$, let
$0<p<\infty$, $0<q\le\infty$, $0<\theta<\infty$, and let $X$ be an
r.i. space on $\Omega$. If $p>1$, assume in addition that $X$ is
$p$-convex. Assume that
\[
D:=\sup_{r>0}\|A_{r}\|_{L^{1}\to L^{1}}<\infty
\]
and that
\[
\theta<\underline{\beta}_{\phi}.
\]
Then
\[
M^{\phi,X}(\Omega) \hookrightarrow\mathcal{B}_{X,q}^{\theta,p}(\Omega).
\]
More precisely, there exists a constant $C>0$ such that, for every $f\in
M^{\phi,X}(\Omega)$,
\[
\|f\|_{\mathcal{B}_{X,q}^{\theta,p}} \le C\|f\|_{M^{\phi,X}} .
\]

\end{corollary}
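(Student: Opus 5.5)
The plan is to feed the pointwise bound of Theorem~\ref{thm:hajlasz-to-averaged-besov-pointwise} directly into the definition of the averaged Besov seminorm. Since $D<\infty$, the factor $1+\|A_r\|_{L^1\to L^1}^{1/p}$ is bounded by $1+D^{1/p}$ uniformly in $r$. Hence, for every $r>0$,
\[
E_X^p(f,r)\preceq \min\{1,\phi(r)\}\,\|f\|_{M^{\phi,X}} .
\]
The whole statement then reduces to a scalar estimate:
\[
I:=\left(\int_0^\infty\left(\frac{\min\{1,\phi(r)\}}{r^{\theta}}\right)^q\frac{dr}{r}\right)^{1/q}<\infty ,
\]
with the supremum replacing the integral when $q=\infty$. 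Once $I<\infty$, we get $\|f\|_{\dot{\mathcal B}^{\theta,p}_{X,q}}\preceq I\,\|f\|_{M^{\phi,X}}$. Adding $\|f\|_X\le\|f\|_{M^{\phi,X}}$ then gives the non-homogeneous embedding with $C\simeq (1+D^{1/p})I+1$.

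To bound $I$, I would split the integral at $r=1$.

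\emph{Large scales, $r\ge1$.} Here I use $\min\{1,\phi(r)\}\le1$, so this part is bounded by $\int_1^\infty r^{-\theta q}\,dr/r<\infty$. This holds because $\theta>0$; when $q=\infty$ the supremum is at most $1$.

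\emph{Small scales, $r<1$.} Here the definition of $\underline\beta_\phi$ is the key input. Fix $\varepsilon>0$ with $\theta+\varepsilon<\underline\beta_\phi$. By the definition of the lower fundamental index as a limit as $\lambda\to0^+$, there is $\lambda_0\in(0,1)$ such that $M_\phi(\lambda)\le\lambda^{\theta+\varepsilon}$ for $\lambda\le\lambda_0$. For $\lambda\in[\lambda_0,1]$, monotonicity of $\phi$ gives $M_\phi(\lambda)\le 1\le\lambda_0^{-(\theta+\varepsilon)}\lambda^{\theta+\varepsilon}$. Thus, for all $0<\lambda\le1$,
\[
M_\phi(\lambda)\preceq\lambda^{\theta+\varepsilon}.
\]
Applying this with $s=1$ and $\lambda=r$ gives $\phi(r)\le M_\phi(r)\phi(1)\preceq r^{\theta+\varepsilon}$ for $0<r\le1$. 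Consequently the small-scale part of $I$ is bounded by a multiple of $\left(\int_0^1 r^{\varepsilon q}\,dr/r\right)^{1/q}<\infty$, and the case $q=\infty$ is immediate.

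The only step requiring care is this passage from the index $\underline\beta_\phi$ to a uniform power bound $\phi(r)\preceq r^{\theta+\varepsilon}$ near $0$. It needs the strict inequality $\theta<\underline\beta_\phi$ together with the submultiplicativity encoded in $M_\phi$. Everything else is bookkeeping of constants, which depend only on $p$, $q$, $\theta$, $\phi$ and $D$.
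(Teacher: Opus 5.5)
Your proposal is correct and follows the paper's proof essentially step by step. The uniform bound $1+D^{1/p}$ turns Theorem~\ref{thm:hajlasz-to-averaged-besov-pointwise} into $E_X^p(f,r)\preceq\min\{1,\phi(r)\}\,\|f\|_{M^{\phi,X}}$. You then check that $r^{-\theta}\min\{1,\phi(r)\}$ lies in $L^q(dr/r)$ by splitting at $r=1$: you use $\theta>0$ for large $r$ and $\phi(r)\preceq r^{\theta+\varepsilon}$, with $\theta+\varepsilon<\underline{\beta}_\phi$, for small $r$, before adding the $X$-term. Your explicit derivation of $M_\phi(\lambda)\preceq\lambda^{\theta+\varepsilon}$ on $(0,1]$ from the limit defining $\underline{\beta}_\phi$, using $M_\phi\le 1$ on $[\lambda_0,1]$ by monotonicity, fills in a step the paper only asserts.
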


\begin{proof}
Let $f\in M^{\phi,X}(\Omega)$. By
Theorem~\ref{thm:hajlasz-to-averaged-besov-pointwise} and the definition of
$D$, for every $r>0$,
\[
E_{X}^{p}(f,r) \preceq(1+D^{1/p})\min\{1,\phi(r)\} \|f\|_{M^{\phi,X}} .
\]
Therefore, if $0<q<\infty$,
\[
\|f\|_{\dot{\mathcal{B}}_{X,q}^{\theta,p}} \preceq\|f\|_{M^{\phi,X}} \left(
\int_{0}^{\infty}\left(  r^{-\theta}\min\{1,\phi(r)\}\right)  ^{q} \frac
{dr}{r} \right)  ^{1/q}.
\]
For $q=\infty$, the same argument gives
\[
\|f\|_{\dot{\mathcal{B}}_{X,\infty}^{\theta,p}} \preceq\|f\|_{M^{\phi,X}}
\sup_{r>0} r^{-\theta}\min\{1,\phi(r)\}.
\]

It remains to check that
\[
r^{-\theta}\min\{1,\phi(r)\}
\]
belongs to $L^{q}((0,\infty),dr/r)$, with the usual modification when
$q=\infty$.

For $r\ge1$, since $\min\{1,\phi(r)\}\le1$, we have
\[
r^{-\theta}\min\{1,\phi(r)\}\le r^{-\theta}.
\]
This gives integrability at infinity because $\theta>0$.

For $0<r<1$, choose $\varepsilon>0$ such that
\[
\theta+\varepsilon<\underline{\beta}_{\phi}.
\]
By the definition of the lower fundamental index (\ref{indi}), there exists a
constant $C_{\varepsilon}>0$ such that
\[
\phi(r)\le C_{\varepsilon}r^{\theta+\varepsilon}, \qquad0<r<1.
\]
Hence
\[
r^{-\theta}\min\{1,\phi(r)\} \le r^{-\theta}\phi(r) \le C_{\varepsilon
}r^{\varepsilon}, \qquad0<r<1.
\]
This gives integrability near zero, and also boundedness near zero in the case
$q=\infty$.

Consequently,
\[
\|f\|_{\dot{\mathcal{B}}_{X,q}^{\theta,p}} \preceq\|f\|_{M^{\phi,X}}.
\]
Adding the $X$-term in the non-homogeneous Besov norm gives
\[
\|f\|_{\mathcal{B}_{X,q}^{\theta,p}} \preceq\|f\|_{M^{\phi,X}},
\]
as claimed.
\end{proof}

\begin{remark}
In the model case $\phi(r)=r^{\alpha}$, $X=L^{p}$, and $q=p$, the condition
$\theta<\underline{\beta}_{\phi}$ becomes simply $\theta<\alpha$. Hence,
taking $\theta=\alpha-\varepsilon$, with $0<\varepsilon<\alpha$,
Corollary~\ref{cor:Besov_embedding} gives
\[
M^{\alpha,L^{p}}(\Omega) \hookrightarrow\mathcal{B}_{L^{p},p}^{\alpha
-\varepsilon,p}(\Omega).
\]
Thus, in the classical notation, this recovers the embedding
\[
M^{\alpha,p}(\Omega) \hookrightarrow B_{p,p}^{\alpha-\varepsilon}(\Omega),
\qquad0<\varepsilon<\alpha,
\]
under the boundedness assumption on the averaging operators.
\end{remark}

The converse need not hold in general. Thus, in the preceding results, the
doubling assumption is replaced by the weaker boundedness condition
\[
\sup_{r>0}\|A_{r}\|_{L^{1}\to L^{1}}<\infty.
\]
The next proposition shows that this condition may hold beyond the doubling setting.

\begin{proposition}
\label{prop:local-comparability-averages} Assume that the measure $\mu$
satisfies the following uniform local comparability condition: there exists
$C\ge1$ such that, for all $x,y\in\Omega$ and all $r>0$ with $d(x,y)\le r$,
\begin{equation}
\label{eq:local_comp}\mu(B(x,r))\le C\,\mu(B(y,r)).
\end{equation}
Then
\[
\sup_{r>0}\|A_{r}\|_{L^{1}\to L^{1}}<\infty.
\]
In fact,
\[
\|A_{r}\|_{L^{1}\to L^{1}}\le C,\qquad r>0.
\]

\end{proposition}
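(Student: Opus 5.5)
The plan is a direct Fubini (Tonelli) computation, using the symmetric structure of the ball relation. Fix $r>0$ and $h\in L^{1}(\mu)$. Since $|A_{r}h(x)|\le A_{r}(|h|)(x)$, it suffices to treat $h\ge0$. We write
\[
\int_{\Omega}A_{r}h(x)\,d\mu(x)=\int_{\Omega}\int_{\Omega}\frac{\chi_{B(x,r)}(y)}{\mu(B(x,r))}\,h(y)\,d\mu(y)\,d\mu(x),
\]
and interchange the order of integration by Tonelli's theorem. The integrand is a nonnegative measurable function on $\Omega\times\Omega$: the set $\{(x,y):d(x,y)<r\}$ is open, and $x\mapsto\mu(B(x,r))$ is lower semicontinuous, hence Borel. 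This gives
\[
\|A_{r}h\|_{L^{1}}=\int_{\Omega}h(y)\left(\int_{B(y,r)}\frac{d\mu(x)}{\mu(B(x,r))}\right)d\mu(y),
\]
where we used that $y\in B(x,r)$ if and only if $x\in B(y,r)$.

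The key step is to bound the inner integral uniformly in $y$. For $x\in B(y,r)$ we have $d(x,y)<r$, so the local comparability condition \eqref{eq:local_comp}, applied with the roles of the two points interchanged (that is, to the pair $(y,x)$), gives $\mu(B(y,r))\le C\,\mu(B(x,r))$. Hence
\[
\frac{1}{\mu(B(x,r))}\le\frac{C}{\mu(B(y,r))}.
\]
Integrating over $x\in B(y,r)$ then bounds the inner integral by $C\,\mu(B(y,r))/\mu(B(y,r))=C$. Therefore $\|A_{r}h\|_{L^{1}}\le C\|h\|_{L^{1}}$ for every $r>0$, and taking the supremum over $r$ proves the claim.

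The only point requiring care is the measurability needed for Tonelli's theorem. The ball-measure function is lower semicontinuous: if $x_{n}\to x$, then $\chi_{B(x_{n},r)}\to1$ pointwise on $B(x,r)$, so Fatou's lemma gives $\liminf_{n}\mu(B(x_{n},r))\ge\mu(B(x,r))$. Beyond this, the argument is a routine calculation, and we expect no genuine obstacle.
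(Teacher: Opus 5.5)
Your proposal is correct and follows essentially the same route as the paper: a Fubini--Tonelli interchange followed by a uniform bound on $\int_{B(y,r)}\mu(B(x,r))^{-1}\,d\mu(x)$ using the comparability condition applied to the pair $(y,x)$, which gives the constant $C$. The only addition is your measurability check (lower semicontinuity of $x\mapsto\mu(B(x,r))$), which the paper leaves implicit; it is a welcome detail, and it relies on the paper's standing separability assumption both to identify the Borel $\sigma$-algebra of $\Omega\times\Omega$ with the product $\sigma$-algebra and to make $\mu$ $\sigma$-finite.
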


\begin{proof}
Let $g\in L^{1}(\Omega)$. By Fubini's theorem,
\begin{align*}
\|A_{r} g\|_{L^{1}}  &  \le\int_{\Omega} \frac{1}{\mu(B(x,r))} \int_{B(x,r)}
|g(y)|\,d\mu(y)\,d\mu(x)\\
&  = \int_{\Omega} |g(y)| \left(  \int_{B(y,r)} \frac{d\mu(x)}{\mu(B(x,r))}
\right)  d\mu(y).
\end{align*}
If $x\in B(y,r)$, then $d(x,y)<r$, and therefore
\[
\frac{\mu(B(y,r))}{\mu(B(x,r))} \le\sup_{d(a,b)\le r} \frac{\mu(B(b,r))}%
{\mu(B(a,r))}.
\]
Hence
\[
\int_{B(y,r)} \frac{d\mu(x)}{\mu(B(x,r))} \le\sup_{d(a,b)\le r} \frac
{\mu(B(b,r))}{\mu(B(a,r))}.
\]
Consequently,
\[
\|A_{r}\|_{L^{1}\to L^{1}} \le\sup_{d(a,b)\le r} \frac{\mu(B(b,r))}%
{\mu(B(a,r))}.
\]
The uniform local comparability condition gives the desired bound.
\end{proof}

\begin{remark}
If $\mu$ is doubling, then it satisfies the uniform local comparability
condition. Indeed, if $d(a,b)\le r$, then
\[
B(b,r)\subset B(a,2r),
\]
and hence
\[
\frac{\mu(B(b,r))}{\mu(B(a,r))} \le\frac{\mu(B(a,2r))}{\mu(B(a,r))} \le
C_{\mu}.
\]
Therefore every doubling measure satisfies
\[
\sup_{r>0}\|A_{r}\|_{L^{1}\to L^{1}}<\infty.
\]
The converse need not hold in general. In particular, the boundedness of the
averaging operators may hold in non-doubling situations; see \cite{Aldaz}.
\end{remark}

\begin{example}
\label{ex:gaussian-product} The boundedness of the averaging operators does
not require the local comparability condition (\ref{eq:local_comp}). Aldaz
\cite{Aldaz} constructed examples with finite measures, such as $\mathbb{R}%
^{d}$ endowed with the standard Gaussian measure $\gamma_{d}$, where local
comparability fails but the averaging operators are uniformly bounded on
$L^{1}$.

The following simple variant applies when the underlying measure is infinite.
Let
\[
\Omega=\mathbb{R}^{d}\times\mathbb{R}, \qquad\mu=\gamma_{d}\otimes
\mathcal{L}^{1},
\]
where $\mathcal{L}^{1}$ denotes the Lebesgue measure on $\mathbb{R}$. Endow
$\Omega$ with the metric
\[
d((x,t),(y,s))=\max\{|x-y|,|t-s|\}.
\]
Then
\[
B((x,t),r) = B_{\mathbb{R}^{d}}(x,r)\times(t-r,t+r),
\]
and therefore
\[
\mu(B((x,t),r)) = \gamma_{d}(B_{\mathbb{R}^{d}}(x,r))\,2r.
\]
The uniform local comparability condition fails, since it already fails for
the Gaussian factor.

On the other hand, the averaging operators $A_{r}$ are uniformly bounded on
$L^{1}(\mu)$. Indeed, define
\[
(\widetilde{A}_{r} f)(y,t) = \frac{1}{2r}\int_{t-r}^{t+r}f(y,s)\,ds.
\]
Then
\[
A_{r} f(x,t) = A_{r}^{\gamma_{d}}\bigl(\widetilde{A}_{r} f(\cdot,t)\bigr)(x),
\]
where $A_{r}^{\gamma_{d}}$ denotes the averaging operator on $(\mathbb{R}%
^{d},\gamma_{d})$. Since $\widetilde{A}_{r}$ is contractive on $L^{1}%
(\mathcal{L}^{1})$, and the operators $A_{r}^{\gamma_{d}}$ are uniformly
bounded on $L^{1}(\gamma_{d})$ by \cite{Aldaz}, Fubini's theorem gives
\[
\|A_{r} f\|_{L^{1}(\mu)} \le C\|f\|_{L^{1}(\mu)}, \qquad r>0,
\]
with a constant independent of $r$.
\end{example}

%-------------------------------

\section*{Statements and Declarations}

\noindent\textbf{Competing interests.}
The author has no relevant financial or non-financial interests to disclose.

\end{document}